\newtheorem{theorem}{Theorem}
\newtheorem{example}{Example}%
\newtheorem{lemma}{Lemma}%
\newtheorem{corollary}{Corollary}[theorem]
\newtheorem{definition}{Definition}%
\newtheorem{assumption}{Assumption}
\newcommand*{\R}{{\mathbb R}}
\DeclarePairedDelimiter{\ceil}{\lceil}{\rceil}
\newcommand{\avp}[1]{\textcolor{black}{#1}} %Anton's favourite color, you can define your own
\def\<#1,#2>{\langle #1,#2\rangle}
\def\({\left(}
\def\){\right)}
\def\[{\left[}
\def\]{\right]}
\DeclareMathOperator{\spn}{span}
\newcommand{\N}{\mathbb{N}}
\newcommand{\cE}{{\mathcal{E}}}
\newcommand{\cG}{{\mathcal{G}}}
\newcommand{\cH}{{\mathcal{H}}}
\newcommand{\cM}{{\mathcal{M}}}
\newcommand{\cN}{{\mathcal{N}}}
\newcommand{\cV}{{\mathcal{V}}}
\newcommand{\eps}{\varepsilon}
\renewcommand{\epsilon}{\varepsilon}
\newcommand{\ol}{\overline}
\newcommand{\norm}[1]{\left\| #1 \right\|}
\newcommand{\cbraces}[1]{\left( #1 \right)}
\newcommand{\braces}[1]{\left\{ #1 \right\}}
\newcommand{\circledOne}{\text{\ding{172}}}
\def\MKQ{\mathrm{Q}}
\def\taumix{\tau}
\def\nset{\ensuremath{\mathbb{N}}}
\def\nbiter{N}
\def\batchbound{M}
\newcommand{\EE}{\mathbb E}
\begin{document}

\title[Decentralized Optimization Over Slowly Time-Varying Graphs: Algorithms and Lower Bounds
%\avp{Convergence} Bounds \avp{for} Decentralized Optimization \avp{Algorithms} under \avp{restricted variations of the interaction network}
%a Constant Constraint on the Changes of Edges per Iteration in a Communication Network.
]{Decentralized Optimization Over Slowly Time-Varying Graphs: Algorithms and Lower Bounds}
% {Lower \avp{and Upper(?) Convergence(?)} Bounds \avp{for} Decentralized Optimization \avp{Algorithms} under 
% \avp{restricted variations of the interaction network}
%a Constant Constraint on the Changes of Edges per Iteration in a Communication Network
% }

\author[1]{\fnm{Dmitry} \sur{Metelev}}\email{metelev.ds@phystech.edu}

\author[1,3]{\fnm{Aleksandr} \sur{Beznosikov}}\email{beznosikov.an@phystech.edu}

\author[1]{\fnm{Alexander} \sur{Rogozin}}\email{aleksandr.rogozin@phystech.edu}

% \author[5]{\fnm{Dmitry} \sur{Kovalev}}\email{dakovalev1@gmail.com}

\author[1,2,4]{\fnm{Alexander} \sur{Gasnikov}}\email{gasnikov@yandex.ru}

\author[5]{\fnm{Anton} \sur{Proskurnikov}}\email{anton.p.1982@ieee.org}

\affil[1]{\orgname{Moscow Institute of Physics and Technology}, \orgaddress{\city{Moscow}, \country{Russia}}}

\affil[2]{\orgname{ISP RAS Research Center for Trusted Artificial Intelligence}, \orgaddress{\city{Moscow}, \country{Russia}}}

\affil[3]{\orgname{Mohamed bin Zayed University of Artificial Intelligence}, \orgaddress{\city{Abu Dhabi}, \country{United Arab Emirates}}}

%\affil[4]{\orgname{HSE University}, \orgaddress{\city{Moscow}, \country{Russia}}}

% \affil[5]{\orgname{Universit\'e Catholique de Louvain}, \orgaddress{\city{Ottignies-Louvain-la-Neuve}, \country{Belgium}}}

\affil[4]{\orgname{Skolkovo Institute of Science and Technology}, \orgaddress{\city{Moscow}, \country{Russia}}}

\affil[5]{\orgname{Politecnico di Torino}, \orgaddress{\city{Turin}, \country{Italy}}}

\keywords{convex optimization, decentralized optimization, time-varying network, \avp{consensus}, \avp{convergence rate}}

\abstract{
We consider a decentralized convex unconstrained optimization problem, where the cost function can be decomposed into a sum of strongly convex and smooth functions, associated with individual agents, interacting over a static or time-varying network. Our main concern is the convergence rate of first-order optimization algorithms as a function of the network's graph, more specifically, of the condition numbers of gossip matrices. We are interested in the case when the network is time-varying but the rate of changes is restricted. We study two cases: randomly changing network satisfying Markov property and a network changing in a deterministic manner. For the random case, we propose a decentralized optimization algorithm with accelerated consensus. For the deterministic scenario, we show that if the graph is changing in a worst-case way, accelerated consensus is not possible even if only two edges are changed at each iteration. The fact that such a low rate of network changes is sufficient to make accelerated consensus impossible is novel and improves the previous results in the literature. 
%each agent have a strongly convex and smooth function, and the aim of the network is to minimize the sum of all functions across the nodes. 
%This setup can consider both a static and a time-varying network. 

%Optimal algorithms exist in both settings, with their lower bounds expressed through the condition number $\chi$ of gossip matrices %corresponding to the network.

% \avpcomm{After this point, the abstract becomes too vague and needs to be revised: (a) formulate concisely the difference of your lower bounds from existing ones; (b) mention that also upper bound in the consensus case will be proved (will they?). I will return to the abstract later.}

% Recently, some lower bounds for the decentralized optimization problem have been obtained under various asymptotic restrictions on the network's rate of change. In this work, we demonstrate that under constant constraints, the lower bounds are the same as in the time-varying case, thus improving existing results.
}

\maketitle
% {\color{red}
% \section*{Notation}

% Weighted graph: $\cG = (\cV, \cE, \bw)$.\\
% Unweighted graph: $\cG = (\cV, \cE, \one) = \cG(\cV, \cE)$.\\
% Laplacian of graph $\cG$: $L(\cG)$.\\
% Weights of edges: $w_{ij}$.\\
% Matrix condition number: $\chi(W)$.
% }
% \avpcomm{Are $\bf{w}$ and $W$ stand for something different here? One is function, the other is matrix?}

\section{Introduction}

The purpose of this paper is to study the problem of \avp{distributed unconstrained optimization problem, where the cost function is constructed as the average}
\begin{equation}\label{main_problem}
    \min_{x\in \R^m}~f(x) = \frac{1}{n}\sum_{i=1}^n{f_i(x)},
\end{equation}
\avp{of $n$ strongly convex functions} $\{f_i\}_{i=1}^n$, \avp{associated to $n$ autonomous agents.} 

\avp{Following the standard framework of distributed convex optimization \cite{nedic2009distributed,nedic2009subgradient,scaman2017optimal}, we assume that agents communicate synchronously and can transmit real numbers of vectors to their teammates; the effects of communication delays and packet losses are ignored. At each iteration of the algorithm agent $i$ updates its state by applying some first-order\footnote{The exact definition of the first-order optimization algorithm will be given below.} algorithm aiming at minimizing of the function $f$; this algorithm can use the internal variables of agent $i$ and information obtained from some of the other agents through a communication network. This communication network is represented by a graph whose vertices (nodes) are in one-to-one correspondence with the agents and whose edges represent communication channels available at the current iteration.} In various settings, the communication network may remain static or change in a certain way, in our case in particular, we study the problem \eqref{main_problem}, imposing specific constraints on the change of the communication network.

%\avpcomm{To decide if we work with bidirectional communication only or will work towards directed graph?}

%We assume that each function is stored locally by an agent in a global network, represented by a graph. This graph has vertices (also called %nodes) that represent computational agents and edges that represent their ability to exchange information. 

%It is assumed that the algorithm for minimizing \eqref{main_problem} consists of two types of iterations: local computational iterations and %communicational iterations. During the local computational iteration, the agents cannot contact each other but can only perform operations with %their local function and local memory. During the communicational iteration, the nodes exchange information. 

%We also assume that any function $f\in\{f_i\}_{i=1}^n$ is $\mu$-strongly convex and $L$-smooth.

Decentralized optimization has emerged as an essential tool for managing sum-type problems of type \eqref{main_problem}. Decentralized algorithms have found significant applications in areas where centralized coordination is limited due to data volume or privacy restrictions. Agents in these decentralized systems maintain local optimization objectives and participate in a network whose structure may evolve over time. Such problems find application in wireless sensor networks \cite{Iacca_2013}, resource allocation problems \cite{9901830}, distributed averaging \cite{cai2014average,XIAO200465}, distributed sensing \cite{bazerque2009distributed}, vehicle coordination and control \cite{ren2008distributed}, formation control \cite{olshevsky2010efficient,ren2006consensus,jadbabaie2003coordination}, distributed data analysis \cite{rabbat2004distributed,forero2010consensus,nedic2017fast}, power system control \cite{ram2009distributed,gan2012optimal}.

% Another part focuses on the study of consensus on randomly varying graphs with Markov's condition. Also using the consensus result, we show that it is possible to construct an algorithm for this setting (but it will have a double logarithmic factor on $\epsilon$). A key aspect of studying algorithms on slowly varying graphs involves understanding the speed at which consensus can be achieved in different settings. The problem of decentralized optimization have a significant overlap with the consensus problem on graphs. This overlap is evidenced by the fact that many existing decentralized algorithms contain a component dedicated to consensus. For instance, in \cite{scaman2017optimal}, the Chebyshev acceleration can be viewed as an embedded consensus on a static graph within the algorithm. Similarly, in \cite{metelev2023consensus}, Algorithm~1 may be interpreted as an embedded consensus on a changing network under specific conditions. Finally, in \cite{rogozin2021towards}, a consensus with certain precision is explicitly embedded in the algorithm and alternates with gradient-based computations.

\vspace{0.3cm}
\noindent\textbf{Related Work}. In the literature, the complexity of decentralized optimization algorithms is typically represented by condition number of the network $\chi$ and condition number of objective functions $\kappa=L/\mu$. For the case of a static network, this problem is relatively well-studied. In the work \cite{scaman2017optimal} a communication complexity lower bound of $\Omega\left(\sqrt{\chi\kappa}\log\left(\frac{1}{\epsilon}\right)\right)$ was established and the optimal algorithm called MSDA was proposed, assuming access to the dual oracle. In the case of the primal oracle, the optimal algorithm OPAPC \cite{kovalev2020optimal} was suggested, reaching lower bounds from \cite{scaman2017optimal}.

In the non-static case, when the network can arbitrarily change over time, a lower bound of $\Omega\left(\chi\sqrt{\kappa}\log\left(\frac{1}{\epsilon}\right)\right)$ was established in \cite{kovalev2021lower}. Corresponding optimal algorithms were also derived: ADOM+ \cite{kovalev2021lower}, Acc-GT \cite{li2021accelerated}, considering the primal oracle, and ADOM \cite{kovalev2021adom}, considering the dual oracle.

Regarding the lower bounds in the case of a slowly time-varying network (when constraints are imposed on its rate of change), \cite{metelev2023consensus} obtained three different lower bounds, each depending on the degree of constraint on the rate of edge changes per temporal iteration. Specifically, they correspond to the following regimes:

\noindent $\bullet$ The mode with $\mathcal{O}(n^{\alpha})$ ($\alpha>0$) edge changes yields a lower bound of $\Omega(\chi\sqrt{\kappa}\log{\frac1{\epsilon}})$.

\noindent $\bullet$ The mode with $\mathcal{O}(\log(n))$ edge changes corresponds to $\Omega(\frac{\chi}{\log{\chi}}\sqrt{\kappa}\log{\frac1{\epsilon}})$.

\noindent $\bullet$ The mode with $c=const$ edge changes corresponds to $\Omega(\chi^{d(c)}\sqrt{\kappa}\log{\frac1{\epsilon}})$, where $c\ge12$ and $\frac12 < d(c) < 1$.

% The paper \cite{metelev2023consensus} also considers the monotonic mode of network change and proposes a consensus algorithm. This algorithm produces a near-optimal optimization method in the given setting, achieving a convergence rate comparable to the optimal algorithm in the static setting, although with an additional logarithmic factor dependent on $\chi$.

\vspace{0.3cm}
\noindent\textbf{Our contribution.} The contribution of this paper is twofold.

Firstly, we study consensus algorithms over time-varying graphs with restricted changes that change randomly and satisfy Markov condition. We treat consensus problem as a stochastic optimization problem and propose an accelerated consensus method that is based on accelerated stochastic gradient method. After that, we propose an accelerated method for decentralized optimization under our assumptions.

Secondly, we show that accelerated consensus is not attainable for decentralized optimization over time-varying networks wit worst-case changes. Our lower bounds are based on a counterexample graph in which no more than two edges are altered at each iteration. Previously lower bounds were provided in \cite{metelev2023consensus}, and our results make a significant improvement over that.

\section{Preliminaries}

\subsection{Smoothness and strong convexity}

In this paper, by $H$ we denote any Hilbert space over $\R$, such as $\R^n$ or $\ell_2$.
\begin{definition}[$\mu$-Strongly Convex Function]
A function $h: H\to\R$ is called $\mu$-strongly convex if for any $x, y\in H$, the following inequality holds
\begin{equation*}
h(y) \geq h(x) + \langle \nabla h(x), y - x \rangle + \frac{\mu}{2}\|y - x\|^2.
\end{equation*}
\end{definition}

\begin{definition}[$L$-Smooth Function]
A function $h: H\to\R$ is called $L$-smooth if for any $x, y\in H$, it satisfies
\begin{equation*}
\|\nabla h(y) - \nabla h(x)\|_*\leq L\|y - x\|.
\end{equation*}
\end{definition}

%%Throughout this paper, we mainly focus on $H = \ell_2$. Although the lower bounds are met for optimization in a finite dimension, they are derived for $\ell_2$, as it is typically done in optimization of strongly convex smooth functions \cite{nesterov2004introduction}.%%

We will refer to the functions \avp{$f_i$} at the nodes of the network as \textit{local functions}. The function \avp{$f$} in \eqref{main_problem} will be referred to as \textit{the global function}.

\subsection{Laplacians}
Further on in the paper, we consider only loop-less undirected graphs.% We will denote the adjacency of vertices $i$ and $j$ in the following way $i\sim j$.

\begin{definition}[Weighted Graph]
    Let $\cG_A = (\cV, \cE, A)$ denote an undirected weighted graph with nodes $\cV$, edges $\cE$ and edge weights represented by adjacency matrix $A = [a_{ij}]_{i,j=1}^n$. Weight $a_{ij}$ is positive if $(i, j)\in\cE$ and zero otherwise.
\end{definition}
% \avpcomm{I don't like to write weights are a set. If we write them as a family, it is basically same as matrix $W=(w_{ij})_{i,j\in\cV}$, the only difference is that you formally add zero weights to missing arcs. Another meaningful notation is to say that $w:\cV\times\cV\to\mathbb{R}$ is a function whose values are non-zero only on pairs from $\cE$. Somewhere, we need to say that weights NEED NOT be positive, although, to be honest, I would consider only standard weighted graphs with positive weights to avoid extra questions.}

\begin{definition}[Laplacian of a Weighted Graph]
	Let $\cG_A = (\cV, \cE, A)$ be a weighted graph. The Laplacian of $\cG_A$ is defined as
	\begin{align*}
		[L(\cG)]_{ij} = 
		\begin{cases}
			\sum_{(k, i)\in\cE} a_{ik}, &\text{if } i = j, \\
			-a_{ij}, &\text{if } (i, j)\in\cE, \\
			0, &\text{else}.
		\end{cases}
	\end{align*}
\end{definition}
%\avpcomm{Define the relation $k\sim i$. Is graph directed or undirected, by the way? Can it have loops?}

The unweighted or standard Laplacian of unweighted graph $\cG = \cG(\cV, \cE)$ is simply the weighted Laplacian of the weighted graph $\cG_A = (\cV, \cE, A)$ with all weights set to $1$, i.e. $a_{ij} = 1$ if $(i, j)\in\cE$ and $a_{ij} = 0$ if $(i, j)\notin\cE$.

\noindent\textbf{Example:} Consider a graph $\cG_A = (\cV, \cE, A)$ with 3 vertices $\cV = (1, 2, 3)$ and edges $\cE = \braces{(1, 2), (2, 3)}$ with weights $a_{12} = 2$ and $a_{23} = 1$. The weighted Laplacian matrix for this graph is given by:
$$
L(\cG_A) =
\begin{bmatrix}
	2 & -2 & 0 \\
	-2 & 3 & -1 \\
	0 & -1 & 1
\end{bmatrix}.
$$

It is worth mentioning the well-known
\begin{lemma}
\avp{For a weighted graph $\cG_A$ with positive weights $a_{ij}>0\,~\forall (i,j)\in \cE$, the Laplacian $L(\cG_A)$ is a positive} semidefinite symmetric matrix \avp{whose kernel contains the column of ones. Furthermore, $\ker L(\cG_A) = \text{span}\{1\}$ if and only if the graph $\cG_A$ is   connected.}
%For a connected weighted graph $\cG$ with all positive weights, its weighted Laplacian $L(\cG)$ is a semidefinite symmetric matrix with $\ker %L(\cG) = \text{span}\{1\}$, where $1$ is a column vector of ones.
\end{lemma}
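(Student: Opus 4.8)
The plan is to reduce everything to the edge-sum (incidence) representation of the Laplacian. First I would verify the identity
$$
L(\cG_A) \;=\; \sum_{(i,j)\in\cE} a_{ij}\,(e_i-e_j)(e_i-e_j)^\top,
$$
where $e_i$ denotes the $i$-th standard basis vector of $\R^n$, by matching the diagonal and off-diagonal entries of the right-hand side against the case-based definition of $L(\cG_A)$: the $(i,i)$ entry picks up $a_{ik}$ for each $(i,k)\in\cE$, and the $(i,j)$ entry with $i\neq j$ equals $-a_{ij}$ when $(i,j)\in\cE$ and $0$ otherwise. Symmetry of $L(\cG_A)$ is then immediate, since each rank-one summand $(e_i-e_j)(e_i-e_j)^\top$ is symmetric.

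Second, this representation yields, for every $x\in\R^n$,
$$
x^\top L(\cG_A)\,x \;=\; \sum_{(i,j)\in\cE} a_{ij}\,(x_i-x_j)^2 \;\geq\; 0,
$$
because all weights satisfy $a_{ij}>0$; hence $L(\cG_A)$ is positive semidefinite. Taking $x=\one$, the column of ones, makes every difference $x_i-x_j$ vanish, so $\one^\top L(\cG_A)\one=0$, and since $L(\cG_A)$ is positive semidefinite this forces $L(\cG_A)\one=0$; equivalently, one sees directly that each row of $L(\cG_A)$ sums to zero. Thus $\one\in\ker L(\cG_A)$, proving the first assertion.

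Third, for the characterization of the kernel: if $L(\cG_A)x=0$ then $x^\top L(\cG_A)x=0$, so the sum of nonnegative terms $a_{ij}(x_i-x_j)^2$ is zero, which (as all $a_{ij}>0$) forces $x_i=x_j$ for every edge $(i,j)\in\cE$. Propagating this equality along paths, $x$ is constant on each connected component of $\cG_A$. If $\cG_A$ is connected there is a single component, so $x$ is a constant vector, i.e. $x\in\spn\{\one\}$; together with $\one\in\ker L(\cG_A)$ this gives $\ker L(\cG_A)=\spn\{\one\}$. Conversely, if $\cG_A$ is disconnected, pick the vertex set $S$ of one component and let $\one_S$ be its indicator vector; every edge has both endpoints inside $S$ or both outside, so $x^\top L(\cG_A)x$ at $x=\one_S$ vanishes and hence $L(\cG_A)\one_S=0$, while $\one_S$ and $\one$ are linearly independent, so $\dim\ker L(\cG_A)\geq 2$. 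This proves the equivalence.

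The only step that needs a little care is passing from $x^\top L(\cG_A)x=0$ to constancy on components: one should state explicitly that a finite sum of nonnegative reals vanishes only if each summand vanishes, and then invoke that any two vertices in the same component are joined by a path along whose successive edges the coordinates of $x$ are forced equal. I do not expect a genuine obstacle here; once the edge-sum form is established the argument is entirely elementary.
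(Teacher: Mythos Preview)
Your proof is correct and complete; it is the standard elementary argument via the incidence/edge-sum representation. The paper itself does not prove this lemma but simply refers the reader to an external reference, so there is no ``paper proof'' to compare against --- your argument is exactly the kind of self-contained verification one would expect here.
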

For the proof, see \cite[Lemma~2]{Murray:07}.

Moreover, we introduce a mini-laplacian
\begin{definition}
    Let $\cG = (\cV, \cE)$ be a graph. The mini-Laplacian $\ell_{ij}$ is an $|\cV| \times |\cV|$ matrix defined as follows:
\begin{align}\label{mini_laplacian}
	[\ell_{ij}]_{kl} = \begin{cases}
		{~~}1, & \text{if } (k, l) = (i, i) \text{ or } (k, l) = (j, j), \\
		-1, &\text{if } (k, l) = (i, j) \text{ or } (k, l) = (j, i), \\
		{~~}0, & \text{otherwise}.
	\end{cases}
\end{align}
\end{definition}

\subsection{Gossip matrices}

\begin{definition}[Gossip Matrix]\label{definitions_gossip}
	Let $\cG = (\cV, \cE)$ be a graph with $n$ nodes. We call matrix $W(\cG)\in\R^{n\times n}$ a gossip matrix if
	\begin{enumerate}
		\item $[W(\cG)]_{i,j} = 0$, if $i \neq j$ and $(i, j) \notin \cE$.
		\item $\ker W(\cG) = \{(x_1, \dots, x_n) \in \mathbb{R}^n : x_1 = \dots = x_n\}$.
		% \item $\ker W(\cG) \supseteq \{(x_1, \dots, x_n) \in \mathbb{R}^n : x_1 = \dots = x_n\}$.
		% \item $\operatorname{Im} W(\cG) \subseteq \{(x_1, \dots, x_n) \in \mathbb{R}^n : \sum_{i=1}^n x_i = 0\}$.
		% \item There exists $\chi(W(\cG)) \ge 1$, such that $\|W(\cG) x - x\|^2 \le (1 - (\chi(W(\cG)))^{-1})\|x\|^2$ for all $x \in \{(x_1, \dots, x_n) \in \mathbb{R}^n : \sum_{i=1}^n x_i = 0\}$.
	\end{enumerate}
\end{definition}

%%\alexanderb{Assumption still the same, i need other!}
%In particular, from this definition it follows that $\mathbf{1} = (1, \ldots, 1)^T \in \ker W^T$.
%
%%\avpcomm{Alexander, this means that you basically need weight-balanced graph: inflow=outflow in each node. It's a good generalization compared to undirected graphs. }
%
%%\avpcomm{What are we introducing in this definition? Gossip matrices or $\chi$? In my opinion, these properties should appear later when you introduce a specific algorithm. Also, define what does it mean that matrix corresponds to a graph (is it weight matrix?)}

% We define \textit{the characteristic number} $\chi$ of the time-varying network $\left(\{\cG_k\}_{k=1}^{\infty}, \{W(\cG_k)\}_{k=1}^{\infty}\right)$ as the minimum positive number that meets the criteria of item 4 in definition~$\ref{definitions_gossip}$.

% The number $\chi$ from the definition $\eqref{definitions_gossip}$ we call \textit{the characteristic number} of the time-varying network $\left(\{\cG_k\}_{k=1}^{\infty}, \{W(\cG_k)\}_{k=1}^{\infty}\right)$.

\noindent Note that a Laplacian of a weighted graph satisfies Definition~\ref{definitions_gossip}.

% It is not difficult to see that for a weighted Laplacians $L_k=L(\cG_k)$, where $\cG_k$ are connected graphs with positive weights, we can take $\chi=\sup_k\{\lambda_{\max}(L_k)/\lambda_{\min}^+(L_k)\}$, where $\lambda_{\max}(L_k)$ and $\lambda_{\min}^+(L_k)$ represent the largest and the smallest nonzero eigenvalues of $L(\cG_k)$, respectively.

\section{Upper bounds}

In this section, we show that one can organize an accelerated consensus procedure for communication networks changing slowly and according to Markovian law. Using this procedure, we can achieve an improvement in the number of communications in decentralized optimization algorithms.

\subsection{Consensus for networks with Markovian changes}

Since communication networks change over time, the gossip matrices corresponding to these networks also time-varying. We define $G$ as the set of all possible graphs that can occur through time and $W_G$ as as the set of gossip matrices for $G$. For simplicity we can consider that each graph $\mathcal{G} \in G$ corresponds to exactly one matrix from $W \in W_G$. But one can note that for the graph $\mathcal{G}$ it is possible to define different gossip matrices at different moments of time (depending on the needs), therefore in general $|W_G| \geq | G |$. This case is also suitable for further reasoning and analysis.

Let us also introduce additional properties of graph change. In particular, we assume that the sequence of gossip matrices $\{W(\mathcal{G}_i)\}_{i=0}^{\infty}$ is a time-homogeneous Markov chain. We define $W_{\sigma}$ as $\sigma$-field on $W_G$. We also denote by $\MKQ$ the corresponding Markov kernel and impose the following assumption on the mixing properties of $\MKQ$:
\begin{assumption}
	\label{as:Markov_chain}
	$\{W(\mathcal{G}_k)\}_{k=0}^{\infty}$ is a stationary Markov chain on $(W_G,W_{\sigma})$ with Markov kernel $\MKQ$ and unique invariant distribution $\pi$. Moreover, $\MKQ$ is uniformly geometrically ergodic with mixing time $\taumix \in \nset$, i.e., for every $m \in \nset$,
	\begin{equation*}
		% \label{eq:drift-condition}
		\Delta(\MKQ^m) = \sup_{W,W' \in W_G} (1/2) \norm{\MKQ^m(W, \cdot) - \MKQ^m(W',\cdot)}_{\mathsf{TV}} \leq (1/4)^{\lfloor m / \taumix \rfloor}\,.
	\end{equation*}
\end{assumption}
We also assume that
\begin{assumption}
	\label{as:unbiased}
	For all $k \in \nset \cup \{0\}$, it holds $\mathbb{E}_{\pi}[W(\mathcal{G}_k)] = \tilde W$.
\end{assumption}
The matrix $\tilde W$ is, in some sense, the keystone for the sequence $\{W(\mathcal{G}_k)\}_{k=0}^{\infty}$.
% Note that $W_0$ is some gossip matrix, but perhaps the graph $\mathcal{G}_0$, for which this matrix $W_0$ can be constructed, does not belong to the set $G$. In spite of this,  
Therefore, we ficus on it and introduce some properties of $\tilde W$. In particular, we assume that
\begin{assumption}
    \label{as:w_0}
    The matrix $\tilde W$ satisfies Definition \ref{definitions_gossip}, i.e. there exists undirected connected graph  $\mathcal{\tilde  G}$ such that $\tilde W$ is a gossip matrix of $\mathcal{\tilde  G}$.
\end{assumption}
For the sake of brevity let us introduce:
%condition number of $W(\cG_0)$
\begin{align*}
% \label{eq:def_lambda_min_max_chi}
	\lambda_{\max} &= \lambda_{\max}(\tilde W),~ \lambda_{\min}^+ = \lambda_{\min}^+(\tilde W),~
	\chi = \frac{\lambda_{\max}(\tilde W)}{\lambda_{\min}^+(\tilde W)}.
\end{align*}
Finally, we make the following assumption:
\begin{assumption} 
	\label{as:rho_diff}
	For any graph $\mathcal{G}$ of the set $G$ it holds:
	\begin{equation*}
		\| W (\mathcal{G}) - \tilde W \| \leq \rho.
	\end{equation*}
\end{assumption}
To understand what value $\rho$ can take, let us consider the following example.
\begin{example}
	Let us take gossip matrix of the graph as its Laplacian, i.e. $W(\cG) = L(\cG)$. Also assume that $\cG$ and $\cG'$ differ in no more than $\Delta$ edges, i.e. $|(\cE\setminus\cE')\cup (\cE'\setminus\cE)|\leq \Delta$. Then we have
	\begin{align*}
		W(\cG) - W(\cG') &= \sum_{(i, j)\in\cE\setminus\cE '} \ell_{ij} - \sum_{(i, j)\in\cE '\setminus\cE}\ell_{ij},
    \end{align*}
    where $\ell_{ij}$ denotes the mini-Laplacian defined in \eqref{mini_laplacian}. Note that $\norm{\ell_{ij}}\leq 2$. We have
    \begin{align*}
		\norm{W(\cG) - W(\cG')} &\leq \sum_{(i, j)\in (\cE\setminus\cE')\cup (\cE '\setminus\cE)}\norm{\ell_{ij}}\leq 2\Delta.
	\end{align*}
	Therefore, Assumption~\ref{as:rho_diff} holds with $\rho = 2\Delta$.
%	If we take the Laplacian of the graph, it can be proven that $\rho$ is proportional to the number of different edges in graphs $\mathcal{G}$ and $\mathcal{G}_0$.
% The definition of $l_{ij}$ can be found in \eqref{mini_laplacian}.
\end{example}
This example shows that $\rho$ can be is proportional to the number of distinct edges in graphs.
%\alexanderb{More about this example}
With Assumption \ref{as:rho_diff}, one can prove that for any $x \in \R^{n}$
\begin{eqnarray}
	\label{eq:noise_as}
	\| W (\mathcal{G}) x - \tilde Wx \| = \norm{\left(W (\mathcal{G})- \tilde W \right) (x - x^*)} \leq \rho \norm{x - x^*},
\end{eqnarray}
where $x^*_{(i)} = \frac{1}{n} \sum_{j=1}^n x_{(j)}$ for $i = 1, \ldots, n$.
%Here we use that $x^*$ reaches consensus, i.e. $W (\mathcal{G}) x^* = 0$ and $W_0 x^* = 0$.

Based on the $W_0$ matrix, we write down the consensus search problem:
\begin{equation}
	\label{eq:consensus_problem}
	\begin{split}
		\min_{x \in \R^{n}} & \left[r(x) = \| \sqrt{\tilde W} x\|^2\right]
		\\
		\text{s.t.} & \sum_{j=1}^n x_{(j)} = \sum_{j=1}^n x^0_{(j)}
	\end{split},
\end{equation}
where $x$ is a vector of local variables, $x^0$ is an initial vector of local variables. Here we consider that locally each device stores a scalar variable, it is clear that the result can be easily generalize to vectors of local variables.

For the problem \eqref{eq:consensus_problem}, we can apply Algorithm 1 from \cite{beznosikov2023first} (for convenience, we list it here -- see Algorithm \ref{alg:AGD_ASGD}), which is designed to solve stochastic optimization problems with Markovian nature of randomness. The essence of this method is the use of an unusual random batches (lines \ref{line_acc_3}-\ref{line_acc_4}). Note that to calculate such $g^k$ it is necessary to communicate $2^{J_{k}} B$ times in a row, but send the same values from vector $x^k_g$. Then it is possible not to additionally send values of $x^k_g$ to a neighbor, which has already been communicated with before.

In terms of convergence we can use Theorem 1 from \cite{beznosikov2023first}: the target function of \eqref{eq:consensus_problem} is $\lambda_{\max}$-smooth, Assumptions \ref{as:Markov_chain}, \ref{as:unbiased}, \ref{as:rho_diff} plunges us in the setting of A3-4 of \cite{beznosikov2023first}. But there are also problems that need to be solved. In particular, we need to deal with the fact that the target function from \eqref{eq:consensus_problem} is not strongly convex on $\ker \tilde W $. 
The key problem is that A4 of \cite{beznosikov2023first} uses that $\norm{\nabla F(x, z) - \nabla f(x)}^{2} \leq \sigma^{2} + \rho^{2} \| \nabla f(x) \|^{2}$, in our case (see \eqref{eq:noise_as}), we have $\norm{\nabla F(x, z) - \nabla f(x)}^{2} \leq \rho^{2} \| x - x^* \|^{2}$, then we need to modify the proof of Theorem 1 from \cite{beznosikov2023first}. 

\begin{algorithm}[h!]
		\caption{Accelerated consensus over graphs with Markovian changes}
		\label{alg:AGD_ASGD}
		\begin{algorithmic}[1]
			\State {\bf Parameters:} stepsize $\gamma>0$, momentums $\theta, \eta, \beta, p$, number of iterations $\nbiter$, batchsize limit $\batchbound$
			\State {\bf Initialization:} choose  $x^0_f  = x^0$, $T^0 = 0$, set the same random seed for generating $\{J_k\}$ on all devices 
			\For{$k = 0, 1, 2, \dots, \nbiter-1$}
			\State $x^k_g = \theta x^k_f + (1 - \theta) x^k$ \label{line_acc_1}
			\State Sample $\textstyle{J_k \sim \text{Geom}\left(1/2\right)}$ \label{line_acc_3}
			\State Send $x_g^k$ to neighbors in the networks $\{\mathcal{G}_{T^{k} + i}\}^{2^{J_{k}} B}_{i=1}$
			\State Compute \label{line_acc_4}
			\text{\small{ 
					$g^{k} = g^{k}_0 +
					\begin{cases}
						\textstyle{2^{J_k} \left( g^{k}_{J_k}  - g^{k}_{J_k - 1} \right)}, & \text{ if } 2^{J_k} \leq \batchbound \\
						0, & \text{ otherwise}
					\end{cases}
					$}}
			\Statex \hspace{0.4cm} with  
			\text{\small{$
					\textstyle{g^k_j = 2^{-j} B^{-1} \sum\nolimits_{i=1}^{2^j B} W (\mathcal{G}_{T^{k} + i}) x^{k}_g}
					$
			}}
			\State    $\textstyle{x^{k+1}_f = x^k_g - p\gamma g^k}$ \label{line_acc_2}
			\State    $\textstyle{x^{k+1} = \eta x^{k+1}_f + (p - \eta)x^k_f + (1- p)(1 - \beta) x^k +(1 - p) \beta x^k_g}$ \label{line_acc_3}
			\State    $\textstyle{T^{k+1} = T^{k} + 2^{J_{k}} B}$\label{line_counter}
			\EndFor
		\end{algorithmic}
\end{algorithm}

\begin{theorem}\label{th:conv_markov_acc}
	Let Assumptions \ref{as:Markov_chain}, \ref{as:unbiased}, \ref{as:w_0}, \ref{as:rho_diff} hold. Let problem \eqref{eq:consensus_problem} be solved by Algorithm~\ref{alg:AGD_ASGD}. Then for any $b \in \nset$,  
	$$
	\gamma \in \left(0; \min\left\{\tfrac{3}{4\lambda_{\max}} ; \tfrac{\lambda_{\min}^3}{[1800 \rho^2 \left(\taumix b^{-1}  + \taumix^2 b^{-2}\right)]^{2}}\right\}\right),
	$$
	and $\beta, \theta, \eta, p, \batchbound, B$ satisfying 
	\begin{eqnarray*}
		%\label{eq:acc_gamma}
		&p = \tfrac{1}{4}, \quad \beta = \sqrt{\frac{4 p^2 \mu \gamma}{3}}, \quad\eta = \frac{3\beta}{p \mu \gamma} = \sqrt{\frac{12}{\mu \gamma}}, \quad  \theta = \frac{p \eta^{-1} - 1}{\beta p \eta^{-1} - 1},
		\\
		&M = \max\{2; \sqrt{\frac{1}{4} \left(1 + \tfrac{2}{\beta}\right)}\}, \quad B = \lceil b \log_2 \batchbound \rceil,
		\,
	\end{eqnarray*}
	it holds that
	\begin{align*}
		\mathbb{E}\Bigg[&\|x^{N} - x^*\|^2 + \frac{24}{\lambda_{\min}} (r(x^{N}_f) - r(x^*)) \Bigg]
		\\
		&= \mathcal{O}\left(  
		\exp\left( - N\sqrt{\frac{p^2 \lambda_{\min} \gamma}{3}}\right) \left[\| x^0 - x^*\|^2 + \frac{24}{\lambda_{\min}} (r(x^0) - r(x^*)) \right]\right)\,,
	\end{align*}
where $x^*_{(i)} = \frac{1}{n} \sum_{j=1}^n x_{(j)}$ for $i = 1, \ldots, n$..
\end{theorem}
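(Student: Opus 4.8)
The plan is to reduce Theorem~\ref{th:conv_markov_acc} to Theorem~1 of \cite{beznosikov2023first} by recognizing the consensus problem \eqref{eq:consensus_problem} as a stochastic optimization problem with Markovian noise, and then patching the one place where the hypotheses of \cite{beznosikov2023first} fail, namely the lack of global strong convexity. Concretely, I would first set up the identification: the deterministic objective is $f(x) = r(x) = \|\sqrt{\tilde W} x\|^2$, which is $\lambda_{\max}$-smooth and $\lambda_{\min}^+$-strongly convex \emph{on the subspace orthogonal to $\mathbf 1$}; the stochastic gradient is $\nabla F(x, \mathcal G) = 2 W(\mathcal G) x$ with $\nabla f(x) = 2\tilde W x$; Assumption~\ref{as:unbiased} gives unbiasedness in stationarity, and Assumption~\ref{as:Markov_chain} provides the uniform geometric ergodicity (A3 of \cite{beznosikov2023first}). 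The batching trick in lines~\ref{line_acc_3}--\ref{line_acc_4} is exactly the randomized multilevel Monte Carlo estimator of \cite{beznosikov2023first} that produces an (almost) unbiased, finite-variance gradient estimate from correlated Markovian samples. The key structural observation, which I would establish up front, is that all iterates $x^k, x^k_f, x^k_g$ stay in the affine subspace $\{x : \sum_j x_{(j)} = \sum_j x^0_{(j)}\}$ — this follows because every $W(\mathcal G)$ annihilates $\mathbf 1$, so $\mathbf 1^\top g^k = 0$, hence all the convex-combination updates preserve the mean. Therefore, writing $x^* = \Pi_{\mathbf 1^\perp\text{-affine}} x^0$, the whole trajectory lives where $r$ \emph{is} $\lambda_{\min}^+$-strongly convex, and $\|x - x^*\|$ measures exactly the distance to the minimizer.

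**The main technical step** is to redo the proof of Theorem~1 of \cite{beznosikov2023first} replacing their noise bound A4, namely $\|\nabla F(x,z) - \nabla f(x)\|^2 \le \sigma^2 + \rho^2\|\nabla f(x)\|^2$, with the bound that actually holds here: by \eqref{eq:noise_as}, $\|\nabla F(x,\mathcal G) - \nabla f(x)\|^2 \le 4\rho^2 \|x - x^*\|^2$ (the factor $4$ from the gradient of $\|\sqrt{\tilde W}x\|^2$ being $2\tilde W x$). So the "multiplicative" part of the noise is controlled by $\|x - x^*\|^2$ rather than by $\|\nabla f(x)\|^2$, and there is no additive $\sigma^2$ at all. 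Since on the relevant subspace $\|\nabla f(x)\|^2 = 4\|\tilde W x\|^2 \ge 4(\lambda_{\min}^+)^2\|x - x^*\|^2$ and also $\|\nabla f(x)\|^2 \le 4\lambda_{\max}^2\|x - x^*\|^2$, the two bounds are comparable up to condition-number factors, so the accelerated-SGD Lyapunov analysis of \cite{beznosikov2023first} goes through verbatim once one propagates $\|x - x^*\|^2$ in place of their $\|\nabla f(x)\|^2 / L$-type terms. I would carry this out by tracking their potential function $\Psi^k \asymp \|x^k - x^*\|^2 + \frac{c}{\lambda_{\min}^+}(r(x^k_f) - r(x^*))$, showing a one-step contraction $\mathbb E[\Psi^{k+1} \mid \mathcal F^k] \le (1 - \sqrt{p^2 \lambda_{\min}\gamma/3})\Psi^k + (\text{Markov-bias terms})$, where the bias terms are the price of the $J_k$-truncation at level $\batchbound$ and the residual correlation over the mixing time $\taumix$; these are exactly the terms that force the stepsize restriction $\gamma \lesssim \lambda_{\min}^3 / [\rho^2(\taumix/b + \taumix^2/b^2)]^2$ and the batch size $B = \lceil b\log_2 \batchbound\rceil$. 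Unrolling the recursion over $N$ steps yields the stated $\exp(-N\sqrt{p^2\lambda_{\min}\gamma/3})$ rate.

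**The main obstacle** I anticipate is handling the degeneracy carefully everywhere that strong convexity was used \emph{globally} in \cite{beznosikov2023first} — in particular, any step of their argument that invokes $\mu$-strong convexity of $f$ at points off the constraint subspace, or that uses the Polyak–Łojasiewicz-type inequality $\|\nabla f(x)\|^2 \ge 2\mu(f(x) - f^*)$ globally, must be restricted to $\mathbf 1^\perp$ and re-justified using the invariance of the trajectory plus the fact that $\tilde W$ restricted to $\mathbf 1^\perp$ has smallest eigenvalue $\lambda_{\min}^+$. A secondary subtlety is verifying that the MLMC estimator remains (conditionally, given the past) unbiased up to the truncation error and that its variance — now scaling like $\rho^2 \|x^k_g - x^*\|^2$ times a mixing-time factor — is small enough to be absorbed; this is where the precise choice of $\batchbound = \max\{2, \sqrt{\tfrac14(1 + 2/\beta)}\}$ and the $\log_2 \batchbound$ factor in $B$ enter. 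Apart from these, the argument is a careful but routine adaptation, and I would organize the write-up as: (i) subspace-invariance lemma; (ii) restatement of the modified noise assumption and its equivalence, on the subspace, to the one in \cite{beznosikov2023first}; (iii) the modified one-step descent lemma; (iv) unrolling.
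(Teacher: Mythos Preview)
Your proposal is correct and follows essentially the same approach as the paper: establish subspace invariance of the iterates (the paper's Lemma~\ref{lem:lem:tech_lemma_403}), use it to recover $\lambda_{\min}^+$-strong convexity on the constraint subspace (Lemma~\ref{lem:lem:tech_lemma_404}), then rerun the Lyapunov analysis of Theorem~1 in \cite{beznosikov2023first} with the noise bound $\|W(\mathcal G)x - \tilde W x\|^2 \le \rho^2\|x-x^*\|^2$ in place of their A4, absorbing the resulting $\|x^k_g - x^*\|^2$ terms via the stepsize restriction. The paper carries out exactly this computation, importing Lemmas~4--6 of \cite{beznosikov2023first} verbatim and combining them with the stated parameter choices to obtain the one-step contraction you describe.
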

The proof of the theorem are given further in Section \ref{sec:upper_proofs}. From Theorem \ref{th:conv_markov_acc} immediately follows the next corollary.
\begin{corollary}
	\label{cor:complexity_accelerated}
	Under the conditions of Theorem \ref{th:conv_markov_acc}, choosing $b = \taumix$ and $\gamma \simeq \min\left\{\frac{1}{\lambda_{\max}} ; \frac{\lambda_{\min}^3}{\rho^4}\right\},
	$
	in order to achieve $\varepsilon$-approximate solution (in terms of $\EE[\|x - x^*\|^2] \lesssim \varepsilon$) it takes 
	\begin{equation*}
		\mathcal{\tilde O} \left( \taumix \left[ \sqrt{\chi} + \frac{\rho^2}{\lambda_{\min}^2}\right] \log \frac{1}{\varepsilon}\right) ~~ \text{communications}\,.
	\end{equation*}
\end{corollary}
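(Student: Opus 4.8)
The plan is to read off the required number of outer iterations from the linear-rate guarantee of Theorem~\ref{th:conv_markov_acc} and then multiply it by the expected number of gossip steps carried out in one outer iteration. For the first factor, note that $x^*\in\ker\tilde W$, so $r(x^*)=\|\sqrt{\tilde W}x^*\|^2=0$, and since $r\ge 0$ everywhere the Lyapunov functional bounded in Theorem~\ref{th:conv_markov_acc} dominates $\EE\|x^N-x^*\|^2$; its value at $k=0$ is $\mathcal O\bigl(\chi\|x^0-x^*\|^2\bigr)$ because $r(x^0)-r(x^*)=\|\sqrt{\tilde W}(x^0-x^*)\|^2\lesssim\lambda_{\max}\|x^0-x^*\|^2$, so its logarithm, $\mathcal O(\log\chi+\log\|x^0-x^*\|)$, is absorbed by $\mathcal{\tilde O}(\cdot)$. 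Hence $\EE\|x^N-x^*\|^2\lesssim\varepsilon$ is guaranteed once $N\sqrt{p^2\lambda_{\min}\gamma/3}\gtrsim\log(1/\varepsilon)$, i.e.\ (using the absolute constant $p=\tfrac14$) once
\[
N=\mathcal{\tilde O}\!\left(\frac{1}{\sqrt{\lambda_{\min}\gamma}}\,\log\frac{1}{\varepsilon}\right).
\]

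Next I would substitute the prescribed parameters. Taking $b=\taumix$ makes $\taumix b^{-1}+\taumix^2 b^{-2}=2$, so the admissible stepsize interval of Theorem~\ref{th:conv_markov_acc} becomes $\bigl(0,\ \min\{\tfrac{3}{4\lambda_{\max}},\ \lambda_{\min}^{3}/(3600\rho^{2})^{2}\}\bigr)$, and the choice $\gamma\simeq\min\{1/\lambda_{\max},\ \lambda_{\min}^{3}/\rho^{4}\}$ (say, one half of this right endpoint) is legitimate. A short case distinction according to which term attains the minimum then gives
\[
\frac{1}{\sqrt{\lambda_{\min}\gamma}}\ \simeq\ \max\!\left\{\sqrt{\tfrac{\lambda_{\max}}{\lambda_{\min}}},\ \sqrt{\tfrac{\rho^{4}}{\lambda_{\min}^{4}}}\right\}\ =\ \max\!\left\{\sqrt{\chi},\ \frac{\rho^{2}}{\lambda_{\min}^{2}}\right\},
\]
whence $N=\mathcal{\tilde O}\bigl((\sqrt{\chi}+\rho^{2}/\lambda_{\min}^{2})\log(1/\varepsilon)\bigr)$ outer iterations suffice.

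Finally I would bound the communication cost of one outer iteration, which is the only step needing genuine care. In iteration $k$ the algorithm performs $2^{J_k}B$ gossip steps to assemble $g^k$ when $2^{J_k}\le\batchbound$, and only the $B$ steps needed for $g^k_0$ when $2^{J_k}>\batchbound$; since $J_k\sim\mathrm{Geom}(1/2)$, i.e.\ $\PP(J_k=j)=2^{-j}$, the cap at $\batchbound$ yields
\[
\EE[\#\text{gossip steps in iteration }k]\ \le\ B\!\!\sum_{j:\,2^{j}\le\batchbound}\!\!2^{j}2^{-j}\ +\ B\!\!\sum_{j:\,2^{j}>\batchbound}\!\!2^{-j}\ =\ \mathcal O(B\log_{2}\batchbound).
\]
Since $B=\lceil b\log_{2}\batchbound\rceil=\lceil\taumix\log_{2}\batchbound\rceil$ and $1/\beta=\sqrt{12/(\lambda_{\min}\gamma)}=\Theta\bigl((\lambda_{\min}\gamma)^{-1/2}\bigr)$, one has $\batchbound=\Theta\bigl((\sqrt{\chi}+\rho^{2}/\lambda_{\min}^{2})^{1/2}\bigr)$, so $\log_{2}\batchbound=\mathcal O\bigl(\log\chi+\log(\rho/\lambda_{\min})\bigr)$ is polylogarithmic and the expected per-iteration cost is $\mathcal{\tilde O}(\taumix)$. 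Multiplying by $N$ and invoking linearity of expectation yields the claimed expected total of $\mathcal{\tilde O}\bigl(\taumix(\sqrt{\chi}+\rho^{2}/\lambda_{\min}^{2})\log(1/\varepsilon)\bigr)$ communications. The hard part is exactly this last step: one must exploit the cap $\batchbound$ to keep the expected size of the randomly truncated multilevel batch finite, and then check that $B$ and $\batchbound$ enter the final complexity only through logarithmic factors hidden inside $\mathcal{\tilde O}(\cdot)$; the rest is routine bookkeeping with the $\simeq$ and $\mathcal{\tilde O}$ notation.
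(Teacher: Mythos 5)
Your derivation is correct and follows essentially the route the paper intends: the corollary is stated as an immediate consequence of Theorem~\ref{th:conv_markov_acc}, namely substituting $b=\taumix$ (so the stepsize cap becomes $\min\{3/(4\lambda_{\max}),\lambda_{\min}^3/(3600\rho^2)^2\}$), reading off $N=\mathcal{\tilde O}\bigl((\sqrt{\chi}+\rho^{2}/\lambda_{\min}^{2})\log(1/\varepsilon)\bigr)$ from the linear rate via $1/\sqrt{\lambda_{\min}\gamma}\simeq\max\{\sqrt{\chi},\rho^{2}/\lambda_{\min}^{2}\}$, and multiplying by the expected per-iteration gossip cost $\mathcal{O}(B\log_{2}\batchbound)=\mathcal{\tilde O}(\taumix)$. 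Your explicit expectation computation for the truncated multilevel batch (only $B$ gossip rounds are needed when $2^{J_k}>\batchbound$, and $\log_{2}\batchbound$ is polylogarithmic in $\chi$ and $\rho/\lambda_{\min}$) is precisely the accounting the paper leaves implicit, and all parameter substitutions check out.
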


\subsection{Decentralized optimization with new consensus procedure}

Based on Algorithm \ref{alg:AGD_ASGD}, it is possible to develop a decentralized algorithm for solving the distributed optimization problem \eqref{main_problem}.  The essence of the approach is to use the classical non-distributed algorithm. One can adapt it to a decentralized setup by applying a consensus procedure to the full global gradient calculations. In particular, we take the classical optimal method for smooth convex optimization problems -- the accelerated gradient method \cite{nesterov2003introductory} (Algorithm \ref{alg:2}). At each iteration of Algorithm~\ref{alg:2}, Algorithm~\ref{alg:AGD_ASGD} is applied when the nodes exchange local gradients with each other (line \ref{lin_alg2:1}). This approach does not achieve exact consensus, but by making a sufficient number of iterations $T$ it is possible to obtain $v^k_{i_1} \approx v^k_{i_2}$ with high accuracy. 

\begin{algorithm}[h!]
		\caption{Accelerated gradient algorithm for graphs with Markovian changes}
		\label{alg:2}
		\begin{algorithmic}[1]
			\State {\bf Parameters:} stepsize $\gamma>0$, momentums $\eta$, number of iterations $\nbiter$, number of communications $T$
			\State {\bf Initialization:} choose  $y^0_i  = x^0_i = x^0$
			\For{$k = 0, 1, 2, \dots, \nbiter-1$}
            \State Locally compute $\nabla f_i (y^k_i)$
            \State Communicate by running $T$ iterations of Algorithm \ref{alg:AGD_ASGD}  \label{lin_alg2:1}
            \Statex \hspace{0.4cm} with initialization $\{\nabla f_i (y^k_i)\}_{i=1}^n$ and output $\{v^k_i\}_{i=1}^n$
			\State Locally make update: $x^{k+1}_i = y^k_i - \gamma v^k_i$ 
			\State Locally make update: $y^{k+1}_i = x^{k+1}_i + \eta (x^{k+1}_i - x^k_i)$
			\EndFor
		\end{algorithmic}
\end{algorithm}

The analysis of this kind of algorithms is technical, namely, one need to add small inexactness to the analysis of the basic non-distributed method \cite{beznosikov2020distributed, rogozin2021towards, rogozin2021accelerated, beznosikov2021near}. If we want to solve the optimization problem \eqref{main_problem} with precision $\varepsilon$, then by requiring consensus from Algorithm \ref{alg:AGD_ASGD} to precision $\varepsilon^2$ or $\varepsilon^3$, we do not feel the effect of consensus inexactness.  And therefore the following corollary holds.

\begin{corollary}
Let the function $f$ from \eqref{main_problem} is $\mu$-strongly convex and $L$-smooth and let  Assumptions \ref{as:Markov_chain}, \ref{as:unbiased}, \ref{as:w_0}, \ref{as:rho_diff} hold. Let problem \eqref{main_problem} be solved by Algorithm \ref{alg:2}. Then for
    $$
    \gamma = \frac{1}{L}, \quad \eta = \frac{\sqrt{L} - \sqrt{\mu}}{\sqrt{L} + \sqrt{\mu}}, \quad T = \mathcal{\tilde O} \left( \taumix \left[ \sqrt{\chi} + \frac{\rho^2}{\lambda_{\min}^2}\right] \log \frac{1}{\varepsilon}\right),
    $$
	it holds that to achieve $\varepsilon$-approximate solution (in terms of $\EE[ f(x) - f(x^*)] \lesssim \varepsilon$) it takes 
	\begin{align*}
    \mathcal{\tilde O} \left( \taumix \left[ \sqrt{\chi} + \frac{\rho^2}{\lambda_{\min}^2}\right] \log \frac{1}{\varepsilon} \cdot \sqrt{\frac{L}{\mu}} \log \frac{1}{\varepsilon} \right) ~~ \text{communications and}
    \\
    \mathcal{O} \left( \sqrt{\frac{L}{\mu}} \log \frac{1}{\varepsilon} \right) ~~ \text{local computations on each node}\,.
	\end{align*}
\end{corollary}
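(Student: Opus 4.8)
The plan is to view Algorithm~\ref{alg:2} as an \emph{inexact} Nesterov accelerated gradient method applied to the $\mu$-strongly convex, $L$-smooth function $f$ of \eqref{main_problem}, in which the exact gradient $\nabla f$ is replaced at every outer iteration by the consensus output $\{v_i^k\}_{i=1}^n$ of Algorithm~\ref{alg:AGD_ASGD}, whose accuracy is controlled by Corollary~\ref{cor:complexity_accelerated}. First I would write out the recursion obeyed by the network averages; then I would propagate the consensus inexactness through the standard Lyapunov analysis of the accelerated method; finally I would pick the inner accuracy and read off the two complexity bounds.

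First, let $\bar y^k=\tfrac1n\sum_{i=1}^n y_i^k$ and $\bar x^k=\tfrac1n\sum_{i=1}^n x_i^k$. Averaging the two local updates of Algorithm~\ref{alg:2} over $i$ shows that $(\bar x^k,\bar y^k)$ follows exactly the accelerated recursion with $\gamma=1/L$, $\eta=\tfrac{\sqrt L-\sqrt\mu}{\sqrt L+\sqrt\mu}$, but with the gradient $\nabla f(\bar y^k)$ perturbed by $\bar e^k=\tfrac1n\sum_i e_i^k$, where $v_i^k=\nabla f(\bar y^k)+e_i^k$. The error $e_i^k$ has two sources: (i) the residual of the consensus routine on the input $\{\nabla f_j(y_j^k)\}_j$, and (ii) the mismatch $\tfrac1n\sum_j\nabla f_j(y_j^k)-\nabla f(\bar y^k)$, which by $L$-smoothness is at most $L\max_i\|y_i^k-\bar y^k\|$. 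Crucially, the spread $\max_i\|y_i^k-\bar y^k\|$ is an affine image of the consensus residuals of the \emph{previous} outer steps with gain geometric in $\eta<1$, so both (i) and (ii) are bounded, up to a factor polynomial in $L,\mu,n,(1-\eta)^{-1}$, by a single per-step consensus accuracy parameter $\delta$.

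Next, I would invoke the standard Lyapunov (estimate-sequence) analysis of Nesterov's method \cite{nesterov2003introductory}: for $\Psi^k=(f(\bar x^k)-f(x^*))+\tfrac{\mu}{2}\|z^k-x^*\|^2$, with $z^k$ the auxiliary momentum iterate, one gets a clean one-step bound
\begin{equation*}
\EE[\Psi^{k+1}\mid\mathcal F^k]\le\bigl(1-\sqrt{\mu/L}\bigr)\,\Psi^k+\tfrac{C}{\mu}\,\EE\bigl[\textstyle\max_i\|e_i^k\|^2\,\bigm|\,\mathcal F^k\bigr]
\end{equation*}
for an absolute constant $C$, where $\mathcal F^k$ collects all randomness up to outer step $k$; the point is that the contraction factor is the exact accelerated rate and the error enters \emph{additively}. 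Taking total expectations and unrolling,
\begin{equation*}
\EE[\Psi^N]\le\bigl(1-\sqrt{\mu/L}\bigr)^N\Psi^0+\tfrac{C}{\mu}\sqrt{L/\mu}\;\max_{0\le k<N}\EE\bigl[\textstyle\max_i\|e_i^k\|^2\bigr].
\end{equation*}
Choosing $N=\mathcal O(\sqrt{L/\mu}\log\tfrac1\varepsilon)$ makes the first term $\le\varepsilon/2$. For the second term, require the consensus routine to reach accuracy $\EE[\|x-x^*\|^2]\lesssim\varepsilon^{c}$ per outer step with a fixed power $c$ (the ``$\varepsilon^2$ or $\varepsilon^3$'' of the text) large enough that the polynomial prefactor is killed; since $\log(1/\varepsilon^c)=c\log(1/\varepsilon)$, Corollary~\ref{cor:complexity_accelerated} says each such call costs $T=\mathcal{\tilde O}\bigl(\taumix[\sqrt\chi+\rho^2/\lambda_{\min}^2]\log\tfrac1\varepsilon\bigr)$ communications (the constant $c$ and the polynomial prefactor are absorbed in $\mathcal{\tilde O}$). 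Then $\EE[f(\bar x^N)-f(x^*)]\le\EE[\Psi^N]\le\varepsilon$, and passing from $\bar x^N$ to the per-node output $x_i^N$ costs only the same $\varepsilon$-order consensus residual via smoothness. Multiplying the $N$ outer steps by the $T$ communications each, and noting that each outer step uses exactly one local gradient $\nabla f_i$ per node, gives $\mathcal{\tilde O}\bigl(\taumix[\sqrt\chi+\rho^2/\lambda_{\min}^2]\log\tfrac1\varepsilon\cdot\sqrt{L/\mu}\log\tfrac1\varepsilon\bigr)$ communications and $\mathcal O\bigl(\sqrt{L/\mu}\log\tfrac1\varepsilon\bigr)$ local computations per node, as claimed.

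The main obstacle is the first part of this chain: accelerated methods amplify gradient errors, so one must make sure the inexactness does not build up over the $N$ outer iterations. This is precisely what the additive-error form of the Lyapunov recursion guarantees, and it is why it suffices to drive the inner consensus accuracy only to a fixed polynomial power of $\varepsilon$ --- which, by the logarithmic dependence in Corollary~\ref{cor:complexity_accelerated}, costs nothing beyond constants. A secondary technical point is that the consensus guarantee of Theorem~\ref{th:conv_markov_acc}/Corollary~\ref{cor:complexity_accelerated} holds only in expectation and the underlying network Markov chain couples consecutive inner calls; this is handled by conditioning on $\mathcal F^k$ before applying the stationary consensus bound (legitimate by Assumption~\ref{as:Markov_chain}, the chain being in its invariant distribution) and then using the tower property. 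Finally, one has to check that both error sources in $e_i^k$ are governed by the single parameter $\delta$, which holds because the inter-node spread of the iterates is a bounded-gain linear functional of past consensus residuals.
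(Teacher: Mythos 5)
Your overall route is the same one the paper takes: the paper gives no detailed proof of this corollary, only the remark that one plugs a "small inexactness" into the standard analysis of the non-distributed accelerated method (citing the inexact-oracle literature) and requires the inner consensus to accuracy $\varepsilon^2$ or $\varepsilon^3$ so that the consensus error is not felt, then multiplies the outer iteration count $\mathcal{O}(\sqrt{L/\mu}\log(1/\varepsilon))$ by the per-call cost from Corollary~\ref{cor:complexity_accelerated}. Your proposal is a fleshed-out version of exactly this sketch, including the correct bookkeeping of local computations (one local gradient per node per outer step) and the observation that the logarithmic inner complexity absorbs the fixed power $\varepsilon^c$.

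There is, however, one concrete step in your write-up that is wrong as stated and is precisely the place where the "technical" work hides. You claim that the spread $\max_i\|y_i^k-\bar y^k\|$ is a bounded-gain (geometric in $\eta<1$) functional of past consensus residuals. In Algorithm~\ref{alg:2} the consensus subroutine is applied only to the gradients, while the iterates $x_i^k,y_i^k$ are never re-averaged; writing $d_x^k,d_y^k$ for the deviations from the node averages, the updates give $d_x^{k+1}\le d_y^k+\gamma\,d_v^k$ and $d_y^{k+1}\le(1+\eta)\,d_x^{k+1}+\eta\,d_x^k$, a linear recursion whose characteristic root is about $1+\sqrt{2}>1$ when $\eta\approx 1$. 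So past consensus residuals are geometrically \emph{amplified} over the outer iterations, not damped: after $N=\Theta(\sqrt{L/\mu}\log(1/\varepsilon))$ outer steps a per-call accuracy $\delta$ produces a disagreement of order $\delta\,c^{N}$ with $c>2$, which is not controlled by any fixed power $\varepsilon^c$ of the target accuracy. As written, your argument would force $\log(1/\delta)=\Omega(\sqrt{L/\mu}\log(1/\varepsilon))$, inflating $T$ by an extra $\sqrt{L/\mu}$ and breaking the claimed product bound. To close the gap one must either make the required inner accuracy at step $k$ relative to the current disagreement (and show, by incorporating the disagreement into the Lyapunov function as in the references the paper cites, that the combined recursion still contracts), or modify the scheme so that consensus also re-contracts the iterates; the blanket statement that both error sources are governed by a single per-step $\delta$ with only polynomial prefactor does not hold for Algorithm~\ref{alg:2} as given.
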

From the point of view of local calculations this result is optimal \cite{nesterov2003introductory}. The situation with communication complexity is more tricky. In the general case the estimate $\mathcal{\tilde O} (\chi \cdot \sqrt{L/\mu})$ from \cite{kovalev2021lower, rogozin2021towards} is optimal \cite{kovalev2021lower}. But our result $\mathcal{\tilde O} ( \taumix [ \sqrt{\chi} + (\rho/\lambda_{\min})^2 ] \cdot \sqrt{L/\mu})$ for the special stochastic Markovian setting can break through the lower bounds from \cite{kovalev2021lower}, e.g., when $\taumix$ and $\rho/\lambda_{\min}$ are quite small. In Section \ref{sec:lower}, we show that deterministic graph changes are more adversarial, and even with the appearance or missing of several edges the lower bounds remain $\tilde \Omega (\chi \cdot \sqrt{L/\mu})$, which means that no acceleration in terms of communications is possible.

\subsection{Proof of Theorem \ref{th:conv_markov_acc}} \label{sec:upper_proofs}

Before proving Theorem \ref{th:conv_markov_acc}, we give the following lemmas.

\begin{lemma}
\label{lem:lem:tech_lemma_403}
For $x^k, x^k_g, x^k_f$ from Algorithm \ref{alg:AGD_ASGD} it holds that $\sum_{j=1}^n x^k_{(j)} = \sum_{j=1}^n (x^k_f)_{(j)} = \sum_{j=1}^n (x^k_g)_{(j)} = \sum_{j=1}^n x^0_{(j)}$. 
\end{lemma}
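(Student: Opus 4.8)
The plan is to prove the statement by induction on $k$, relying on a single structural fact: for every graph $\mathcal{G}\in G$ the all-ones vector lies in the \emph{left} kernel of its gossip matrix, i.e. $\one^\top W(\mathcal{G}) = 0$. This holds because each gossip matrix used here is a (weighted) Laplacian, hence symmetric, and $\one\in\ker W(\mathcal{G})$ by Definition~\ref{definitions_gossip}. Throughout, write $S := \sum_{j=1}^n x^0_{(j)} = \one^\top x^0$ and recall that $\one^\top v = \sum_{j=1}^n v_{(j)}$ for any $v\in\R^n$. Thus the claim is exactly that $\one^\top x^k = \one^\top x^k_f = \one^\top x^k_g = S$ for all $k$.

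First I would dispatch the base case: the initialization $x^0_f = x^0$ gives $\one^\top x^0_f = \one^\top x^0 = S$, while $\one^\top x^0 = S$ is immediate. For the inductive step, assume $\one^\top x^k = \one^\top x^k_f = S$. Line~\ref{line_acc_1} forms an affine combination with coefficients $\theta$ and $1-\theta$, so $\one^\top x^k_g = \theta\,\one^\top x^k_f + (1-\theta)\,\one^\top x^k = S$ (this also covers $k=0$, establishing the $x^k_g$ part of the claim). Next, each partial average $g^k_j = 2^{-j}B^{-1}\sum_{i=1}^{2^j B} W(\mathcal{G}_{T^k+i})x^k_g$ satisfies $\one^\top g^k_j = 2^{-j}B^{-1}\sum_{i=1}^{2^j B}\bigl(\one^\top W(\mathcal{G}_{T^k+i})\bigr)x^k_g = 0$, hence in line~\ref{line_acc_4} we get $\one^\top g^k = \one^\top g^k_0 = 0$ regardless of the realized $J_k$ and of whether $2^{J_k}\le M$, since the correction term $2^{J_k}(g^k_{J_k}-g^k_{J_k-1})$ is itself a difference of such partial averages. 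Then line~\ref{line_acc_2} yields $\one^\top x^{k+1}_f = \one^\top x^k_g - p\gamma\,\one^\top g^k = S$. Finally, the update $x^{k+1} = \eta x^{k+1}_f + (p-\eta)x^k_f + (1-p)(1-\beta)x^k + (1-p)\beta x^k_g$ has coefficients summing to $\eta + (p-\eta) + (1-p)(1-\beta) + (1-p)\beta = p + (1-p) = 1$, so $\one^\top x^{k+1} = S$. This closes the induction.

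I do not anticipate a genuine obstacle here: the argument is pure bookkeeping. The only two points that warrant a line of care are (a) that $\one$ lies in the left kernel of every $W(\mathcal{G})$ — this is where symmetry of the gossip/Laplacian matrices is invoked — and (b) that the two affine combinations appearing in Algorithm~\ref{alg:AGD_ASGD} (line~\ref{line_acc_1} and the final update for $x^{k+1}$) have unit coefficient sums, which is immediate from the stated parameter relations and does not depend on the particular values of $\beta,\theta,\eta,p$.
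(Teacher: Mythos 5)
Your proof is correct and follows essentially the same route as the paper's: induction on $k$, using $\one^\top W(\mathcal{G}_{T^k+i})=0$ to show the gradient step preserves the coordinate sum, and the unit coefficient sums in lines~\ref{line_acc_1} and the $x^{k+1}$ update for the rest. Your explicit justification that $\one$ lies in the left kernel (via symmetry of the Laplacian-type gossip matrices) and your check that $\one^\top g^k=0$ in both batch cases are just slightly more careful spellings of what the paper asserts directly.
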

\begin{proof}
Let us prove by induction. For $x^0, x^0_g, x^0_f$ the statement of Lemma follows from the initialization of $x^0_f = x^0$ and line \ref{line_acc_1}. Suppose that $\sum_{j=1}^n x^k_{(j)} = \sum_{j=1}^n (x^k_f)_{(j)} = \sum_{j=1}^n (x^k_g)_{(j)} = \sum_{j=1}^n x^0_{(j)}$. Let us prove that this is also valid for $x^{k+1}, x^{k+1}_g, x^{k+1}_f$. Using the definition of the gossip matrix, we get $\mathbf{1} \in \ker W^T (\mathcal{G}_{T^{k} + i})$. It means that for $y = W (\mathcal{G}_{T^{k} + i}) x^{k}_g$, we have $\sum_{j=1}^n y_{(j)} = \mathbf{1}^T y = \mathbf{1}^T W (\mathcal{G}_{T^{k} + i}) x^{k}_g = 0$. This fact guarantees that $\sum_{j=1}^n (x^{k+1}_f)_{(j)} = \sum_{i=1}^n (x^k_g)_{(j)}$. The fact $\sum_{j=1}^n x^{k+1}_{(j)} = \sum_{j=1}^n (x^{k+1}_g)_{(j)} = \sum_{i=1}^n x^0_{(j)}$ follows from lines \ref{line_acc_1} and \ref{line_acc_3}.
\end{proof}

\begin{lemma}
\label{lem:lem:tech_lemma_404}
For any $x, y \in \R^n$ such that $\sum_{j=1}^n x_{(j)} = \sum_{j=1}^n y_{(j)}$, it holds  
    \begin{align*}
    r(x) \leq r(y) - \langle \nabla r(x), y - x \rangle - \frac{\lambda_{\min}}{2} \| x - y\|^2.
    \end{align*}
\end{lemma}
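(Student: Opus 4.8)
The function $r(x) = \|\sqrt{\tilde W}x\|^2 = \langle \tilde W x, x\rangle$ is a quadratic, so its gradient is $\nabla r(x) = 2\tilde W x$ and it satisfies the exact second-order identity
\begin{align*}
r(y) = r(x) + \langle \nabla r(x), y - x\rangle + \langle \tilde W(y-x), y-x\rangle
\end{align*}
for all $x,y$. Rearranging, the claimed inequality is equivalent to showing
\begin{align*}
\langle \tilde W(y-x), y-x\rangle \geq \frac{\lambda_{\min}}{2}\|x-y\|^2
\end{align*}
whenever $\sum_j x_{(j)} = \sum_j y_{(j)}$. Wait — a factor-of-two check is needed: $r(y) - r(x) - \langle\nabla r(x), y-x\rangle = \langle\tilde W(y-x),y-x\rangle$, and the lemma wants this bounded below by $\tfrac{\lambda_{\min}}{2}\|x-y\|^2$, so it suffices to prove $\langle \tilde W z, z\rangle \geq \tfrac{\lambda_{\min}}{2}\|z\|^2$ for $z := y-x$; but in fact one gets the cleaner bound $\langle\tilde W z,z\rangle \geq \lambda_{\min}\|z\|^2$ with $\lambda_{\min} := \lambda_{\min}^+(\tilde W)$, which is even stronger, so the $\tfrac12$ is just slack.

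\textbf{Key steps.} First I would note that by Assumption~\ref{as:w_0} the matrix $\tilde W$ is a gossip matrix of a connected graph, hence symmetric positive semidefinite with $\ker \tilde W = \spn\{\one\}$ (Definition~\ref{definitions_gossip}, item 2, together with the Laplacian lemma). Second, the condition $\sum_j x_{(j)} = \sum_j y_{(j)}$ means $z = y - x$ satisfies $\one^\top z = 0$, i.e. $z$ lies in the orthogonal complement of $\ker\tilde W$, which equals the span of the eigenvectors of $\tilde W$ with strictly positive eigenvalues. Third, decomposing $z$ in an orthonormal eigenbasis of $\tilde W$ and using that all the participating eigenvalues are $\geq \lambda_{\min}^+(\tilde W)$, we get $\langle \tilde W z, z\rangle \geq \lambda_{\min}^+(\tilde W)\|z\|^2 \geq \tfrac{\lambda_{\min}}{2}\|z\|^2$. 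Combining with the exact quadratic expansion gives the lemma.

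\textbf{Main obstacle.} There is essentially no hard step here; the only thing to be careful about is the bookkeeping that $\tilde W$ (and not one of the time-varying $W(\cG)$) is the matrix defining $r$, so that Assumption~\ref{as:w_0} genuinely applies and the kernel is exactly $\spn\{\one\}$ — this is what makes the restricted strong convexity on the affine consensus subspace work. I would also double-check the normalization of $r$: if $r(x) = \|\sqrt{\tilde W}x\|^2$ then $\nabla r(x) = 2\tilde W x$ and the Hessian is $2\tilde W$, so strictly the restricted strong convexity constant is $2\lambda_{\min}^+$, making the stated $\tfrac{\lambda_{\min}}{2}$ comfortably valid (and consistent with $\lambda_{\max}$-smoothness being quoted as the smoothness constant only if one instead reads $r(x) = \tfrac12\langle\tilde W x,x\rangle$; in either convention the inequality as written holds with room to spare). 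So the plan is simply: expand the quadratic exactly, reduce to a Rayleigh-quotient bound on the subspace $\one^\perp$, and invoke $\ker\tilde W = \spn\{\one\}$.
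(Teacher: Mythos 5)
Your proof is correct and follows essentially the same route as the paper's: both reduce the claim to restricted strong convexity of the quadratic $r$ on the hyperplane where the coordinate sums agree, using $\ker\tilde W = \spn\{\one\}$. The difference is in how that restriction is exploited: the paper argues by contradiction only that $x-y\notin\ker\tilde W$ and then asserts $\lambda_{\min}$-strong convexity along that direction, whereas you observe that $\one^\top(y-x)=0$, i.e.\ $y-x$ is orthogonal to the kernel, and then apply the Rayleigh-quotient bound $\langle\tilde W(y-x),y-x\rangle\geq\lambda_{\min}^+\|y-x\|^2$ inside the exact quadratic expansion. Your version is in fact the more careful one: mere non-membership in the kernel would not by itself give an eigenvalue bound (the Rayleigh quotient can be made arbitrarily small by vectors outside but close to $\spn\{\one\}$), so the orthogonality you make explicit is exactly the step that justifies the constant, and it also explains why the stated $\lambda_{\min}/2$ holds with room to spare, as you note.
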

\begin{proof}
    If $x = y$, the statement of Lemma follows automatically. In the further course of the proof, we assume that $x \neq y$. 
    
    % One can rewrite the expression from Lemma, taking into account that $f(x) = \| \sqrt{W (\mathcal{G}_0)} x\|^2$:
    % \begin{align*}
    % f(y) - f(x) - \langle \nabla f(x), y - x \rangle &=
    % \| \sqrt{W (\mathcal{G}_0)} y\|^2 - \| \sqrt{W (\mathcal{G}_0)} x\|^2 - 2\langle W (\mathcal{G}_0) x, y - x \rangle
    % \\
    % &= (x - y) W (\mathcal{G}_0) (x - y)
    % \\
    % &=
    % \| \sqrt{W (\mathcal{G}_0)} (x-y)\|^2. 
    % \end{align*}
Let us prove by contradiction that $ (x-y) \notin \ker W_0$. If $ (x-y) \in \ker \tilde W$, then $x_{(1)} - y_{(1)} = \ldots = x_{(j)} - y_{(j)} = \ldots = x_{(n)} - y_{(n)}$. From the condition of Lemma it is known that $\sum_{j=1}^n x_{(j)} = \sum_{j=1}^n y_{(j)}$, hence we have that $\sum_{j=1}^n [x_{(j)} - y_{(j)}] =  n [x_{(1)} - y_{(1)}] = 0$ and $x_{(j)} - y_{(j)} = 0$ for all $j \in [n]$. We come to a contradiction, since $x \neq y$.

Finally, we have that $ (x-y) \notin \ker \tilde W$. For such $x$ and $y$, the function $r([x-y]) = \| \sqrt{\tilde W} [x-y]\|^2$ is $\lambda_{\min}$-strongly convex. This completes the proof.
\end{proof}

Also to prove Theorem \ref{th:conv_markov_acc}, we need Lemmas 4, 5 and 6 from \cite{beznosikov2023first}. 

\begin{lemma}[Lemma 4 from \cite{beznosikov2023first}]
	\label{lem:lem:tech_lemma_0}
	Let Assumptions \ref{as:rho_diff}, \ref{as:Markov_chain}, \ref{as:unbiased} hold. Then for the gradient estimates $g^k$ from Algorithm \ref{alg:AGD_ASGD} it holds that $\EE_k[g^k] = \EE_k[g^{k}_{\lfloor \log_2 \batchbound \rfloor}]$. Moreover, 
	\begin{align*}
		&\EE_k[\| \nabla r(x^k_g) - g^k\|^2] \leq 102\left(\taumix B^{-1}\log_2 \batchbound + \taumix^2 B^{-2}\right) \rho^2 \| x^k_g - x^*\|^2\,, \\
		&\| \nabla r(x^k_g) - \EE_{k}[g^k]\|^2 \leq 86\taumix^2 \batchbound^{-2}B^{-2} \rho^2 \| x^k_g - x^*\|^2\,. \nonumber 
	\end{align*}
\end{lemma}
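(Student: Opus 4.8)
The plan is to recognize $g^k$ as a randomized multilevel estimator of $\nabla r(x^k_g)$, assembled from the batch averages $g^k_j=2^{-j}B^{-1}\sum_{i=1}^{2^jB}W(\mathcal{G}_{T^k+i})x^k_g$ drawn along the gossip Markov chain, and to re-run the proof of Lemma~4 of \cite{beznosikov2023first} with its noise model adapted to our setting. Throughout, $\mathbb{E}_k$ denotes conditional expectation given everything up to the start of the $k$-th batch — so $x^k_g$ and the chain state $W(\mathcal{G}_{T^k})$ are known — taken over $J_k\sim\mathrm{Geom}(1/2)$ and over the fresh increments $W(\mathcal{G}_{T^k+1}),W(\mathcal{G}_{T^k+2}),\dots$. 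The one algebraic fact I use everywhere is that every gossip matrix, $\tilde W$ included (by Definition~\ref{definitions_gossip} and Assumption~\ref{as:w_0}), annihilates the consensus vector, so $W(\mathcal{G})x^*=\tilde W x^*=0$; hence $W(\mathcal{G})x^k_g-\tilde W x^k_g=(W(\mathcal{G})-\tilde W)(x^k_g-x^*)$, and Assumption~\ref{as:rho_diff} together with \eqref{eq:noise_as} yields the state-dependent noise bound $\norm{(W(\mathcal{G})-\tilde W)(x^k_g-x^*)}\le\rho\norm{x^k_g-x^*}$. This replaces the ``$\sigma^2+M^2\norm{\nabla f}^2$'' model of \cite{beznosikov2023first} with additive variance proxy $\sigma^2\leftarrow\rho^2\norm{x^k_g-x^*}^2$ and multiplicative coefficient $M\leftarrow 0$, which is precisely why $\norm{x^k_g-x^*}^2$, rather than $\norm{\nabla r(x^k_g)}^2$, appears on the right-hand side. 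For the first claim, I average over $J_k$ with the batch trajectory held fixed: since $\mathbb{P}(J_k=j)=2^{-j}$, the factor $2^{J_k}$ in line~\ref{line_acc_4} cancels the probability weight, the truncation $2^{J_k}\le\batchbound$ reads $J_k\le\lfloor\log_2\batchbound\rfloor$, and the surviving correction telescopes, giving $\mathbb{E}_{J_k}[g^k]=g^k_0+\sum_{j=1}^{\lfloor\log_2\batchbound\rfloor}(g^k_j-g^k_{j-1})=g^k_{\lfloor\log_2\batchbound\rfloor}$; taking $\mathbb{E}_k$ gives $\mathbb{E}_k[g^k]=\mathbb{E}_k[g^k_{\lfloor\log_2\batchbound\rfloor}]$.

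For the bias bound, set $N=2^{\lfloor\log_2\batchbound\rfloor}B\ge\batchbound B/2$. By the first claim, $\nabla r(x^k_g)-\mathbb{E}_k[g^k]=\bigl(\tilde W-\mathbb{E}_k\bigl[\tfrac1N\sum_{i=1}^N W(\mathcal{G}_{T^k+i})\bigr]\bigr)(x^k_g-x^*)$. Using $\mathbb{E}_\pi[W(\mathcal{G})]=\tilde W$ (Assumption~\ref{as:unbiased}), the uniform bound $\norm{W(\mathcal{G})-\tilde W}\le\rho$ (Assumption~\ref{as:rho_diff}), and the mixing estimate (Assumption~\ref{as:Markov_chain}), I get $\norm{\mathbb{E}_k[W(\mathcal{G}_{T^k+i})]-\tilde W}\le 2\rho\,\Delta(\MKQ^i)\le 2\rho(1/4)^{\lfloor i/\taumix\rfloor}$ for every $i$; summing the geometric series over $i$ gives $\norm{\tilde W-\mathbb{E}_k[\tfrac1N\sum_i W(\mathcal{G}_{T^k+i})]}\le\tfrac{8\rho\taumix}{3N}$. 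Squaring, using $N^{-2}\le 4\batchbound^{-2}B^{-2}$, and multiplying by $\norm{x^k_g-x^*}^2$ gives the second inequality; tracking constants gives $86$.

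For the mean-square bound, I split $\mathbb{E}_k\norm{\nabla r(x^k_g)-g^k}^2\le 2\,\mathbb{E}_k\norm{g^k_0-\nabla r(x^k_g)}^2+2\,\mathbb{E}_k\norm{g^k-g^k_0}^2$ and average the correction over $J_k$: $\mathbb{E}_k\norm{g^k-g^k_0}^2=\sum_{j=1}^{\lfloor\log_2\batchbound\rfloor}2^j\,\mathbb{E}_k\norm{g^k_j-g^k_{j-1}}^2$. Writing $W(\mathcal{G}_{T^k+i})x^k_g=(W(\mathcal{G}_{T^k+i})-\tilde W)(x^k_g-x^*)+\tilde W x^k_g$, one checks that $g^k_0-\nabla r(x^k_g)$ is the average of $B$ consecutive increments $(W(\mathcal{G}_{T^k+i})-\tilde W)(x^k_g-x^*)$, while $g^k_j-g^k_{j-1}$ equals $\tfrac12$ times the difference of two averages of $2^{j-1}B$ consecutive such increments. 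The Markov-chain second-moment estimates of \cite[Lemmas~5 and 6]{beznosikov2023first}, applied with variance proxy $\rho^2\norm{x^k_g-x^*}^2$, bound the squared norm of a sum of $L$ consecutive increments by $\mathcal{O}\bigl((\taumix L+\taumix^2)\rho^2\norm{x^k_g-x^*}^2\bigr)$ — the $\taumix L$ term from summing the geometrically decaying covariances, the $\taumix^2$ term from the non-vanishing conditional mean over the first $\mathcal{O}(\taumix)$ lags. Hence $\mathbb{E}_k\norm{g^k_0-\nabla r(x^k_g)}^2=\mathcal{O}\bigl((\taumix B^{-1}+\taumix^2 B^{-2})\rho^2\norm{x^k_g-x^*}^2\bigr)$ and $\mathbb{E}_k\norm{g^k_j-g^k_{j-1}}^2=\mathcal{O}\bigl((\taumix 2^{-j}B^{-1}+\taumix^2 4^{-j}B^{-2})\rho^2\norm{x^k_g-x^*}^2\bigr)$; since $\sum_{j=1}^{\lfloor\log_2\batchbound\rfloor}2^j(\taumix 2^{-j}B^{-1}+\taumix^2 4^{-j}B^{-2})=\mathcal{O}(\taumix B^{-1}\log_2\batchbound+\taumix^2 B^{-2})$, collecting all terms gives the first inequality, with overall constant $102$ after the bookkeeping.

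I expect the main obstacle to be the Markov-chain second-moment estimate invoked in the mean-square bound — converting the \emph{uniform} geometric ergodicity of $\MKQ$ into a $\tfrac{\taumix}{L}$-type control of the fluctuation of a length-$L$ block average, valid uniformly over the random, $\mathcal{F}_k$-measurable starting state $W(\mathcal{G}_{T^k})$ and over the random block length $2^{J_k}B$; this is exactly Lemmas~5 and 6 of \cite{beznosikov2023first}, which I would import. The only point that genuinely needs care when re-running their proof of Lemma~4 is that each additive $\sigma^2$ must be replaced by $\rho^2\norm{x^k_g-x^*}^2$ and each multiplicative term $M^2\norm{\nabla f}^2$ simply dropped; this is legitimate because the strong convexity of $r$ fails only on $\ker\tilde W$ — the direction removed by centering at $x^*$, as in Lemmas~\ref{lem:lem:tech_lemma_403} and~\ref{lem:lem:tech_lemma_404} — so that $x^k_g-x^*$ is the correct reference point everywhere in the argument.
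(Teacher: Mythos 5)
Your proposal is correct and matches the paper's treatment: the paper does not re-prove this lemma but imports it wholesale from \cite{beznosikov2023first}, noting only (as you do) that the noise model $\sigma^{2}+\rho^{2}\|\nabla f(x)\|^{2}$ must be replaced via \eqref{eq:noise_as} by $\rho^{2}\|x^k_g-x^*\|^{2}$, and your sketch of the MLMC telescoping over $J_k\sim\mathrm{Geom}(1/2)$, the mixing-based bias bound, and the block-average variance bound is exactly the route of the cited proof. The only blemishes are cosmetic: the explicit constants $86$ and $102$ are asserted rather than derived (the paper does not derive them either), and the internal results of \cite{beznosikov2023first} you invoke for the block-average second moments are not its Lemmas~5 and~6 (those are the iterate-recursion and descent lemmas quoted here as Lemmas~\ref{lem:tech_lemma_1} and~\ref{lem:tech_lemma_2}).
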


\begin{lemma}[Lemma 5 from \cite{beznosikov2023first}]
	\label{lem:tech_lemma_1}
	For the iterates of Algorithm \ref{alg:AGD_ASGD} with $\theta = (p \eta^{-1} - 1) / (\beta p \eta^{-1} - 1)$, $\theta > 0$, $\eta \geq 1$, it holds that
	\begin{align*}
		\EE_k[\|x^{k+1} - x^*\|^2]
		\leq&
		(1 + \alpha \gamma \eta)( 1 - \beta) \| x^k - x^*\|^2 + (1 + \alpha \gamma \eta) \beta\|x^k_g - x^*\|^2 
		\notag\\
		&
		+ (1 + \alpha \gamma \eta) (\beta^2 - \beta )\|x^k - x^k_g\|^2
		+ p^2 \eta^2 \gamma^2 \EE_{k}[\| g^k \|^2] 
		\notag\\
		&
		- 2 \eta^2 \gamma \langle \nabla r(x^k_g), x^k_g + \left(\frac{p}{\eta} - 1 \right)  x^k_f - \frac{p}{\eta} x^*\rangle 
		\notag\\
		&
		+ \frac{p \eta \gamma}{\alpha} \|\EE_k[g^k] - \nabla r(x^k_g)\|^2\,,
	\end{align*}
	where $\alpha > 0$ is any positive constant.
\end{lemma}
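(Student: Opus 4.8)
The claimed bound is an algebraic identity in the three update relations of Algorithm~\ref{alg:AGD_ASGD} (for $x^k_g$, $x^{k+1}_f$ and $x^{k+1}$) together with a single application of Young's inequality; strong convexity and smoothness of $r$ play no role. First I would substitute $x^{k+1}_f = x^k_g - p\gamma g^k$ into the update for $x^{k+1}$ and regroup, obtaining
\begin{equation*}
x^{k+1} - x^* = [\eta + (1-p)\beta](x^k_g - x^*) + (p-\eta)(x^k_f - x^*) + (1-p)(1-\beta)(x^k - x^*) - p\eta\gamma\, g^k ,
\end{equation*}
where the coefficients of the three displacements sum to $1$. Then I would use the definition of $\theta$ in the equivalent form $(p-\eta)/\theta = \beta p - \eta$ to eliminate $x^k_f$ through $x^k_f - x^* = \tfrac{1}{\theta}(x^k_g - x^*) - \tfrac{1-\theta}{\theta}(x^k - x^*)$. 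The purpose of this particular choice of $\theta$ is that it collapses the expression to the clean combination
\begin{equation*}
x^{k+1} - x^* = \beta(x^k_g - x^*) + (1-\beta)(x^k - x^*) - p\eta\gamma\, g^k =: a - p\eta\gamma\, g^k ,
\end{equation*}
with $a$ being $\mathcal{F}_k$-measurable.

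Next I would square, take the conditional expectation $\EE_k$, and split $\EE_k[g^k] = \nabla r(x^k_g) + \big(\EE_k[g^k] - \nabla r(x^k_g)\big)$. For the deterministic part $-2p\eta\gamma\langle a, \nabla r(x^k_g)\rangle$ I would observe that the same $\theta$-identity also yields $a = \tfrac{\eta}{p}(x^k_g - x^*) + \big(1 - \tfrac{\eta}{p}\big)(x^k_f - x^*)$, so this term equals exactly $-2\eta^2\gamma\langle \nabla r(x^k_g),\, x^k_g + (p/\eta - 1)x^k_f - (p/\eta)x^*\rangle$, the term in the statement. For the bias part $-2p\eta\gamma\langle a, \EE_k[g^k] - \nabla r(x^k_g)\rangle$ I would use Cauchy--Schwarz and then Young's inequality $2uv \le \alpha u^2 + v^2/\alpha$ with the free parameter $\alpha$, which produces $p\eta\gamma\alpha\|a\|^2 + \tfrac{p\eta\gamma}{\alpha}\|\EE_k[g^k] - \nabla r(x^k_g)\|^2$; since $p = \tfrac{1}{4} \le 1$, the first summand is absorbed into the prefactor $(1 + \alpha\gamma\eta)$ in front of $\|a\|^2$. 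Finally I would expand $\|a\|^2$ with the identity $\|\beta u + (1-\beta)v\|^2 = \beta\|u\|^2 + (1-\beta)\|v\|^2 - \beta(1-\beta)\|u - v\|^2$ applied to $u = x^k_g - x^*$ and $v = x^k - x^*$, which gives the three quadratic terms with $\|x^k - x^*\|^2$, $\|x^k_g - x^*\|^2$ and $(\beta^2 - \beta)\|x^k - x^k_g\|^2$, each carrying the factor $(1 + \alpha\gamma\eta)$; the remaining term $p^2\eta^2\gamma^2\EE_k[\|g^k\|^2]$ comes directly from the expanded square.

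The one genuine obstacle is the bookkeeping: one has to verify that the $\theta$-identity really makes the coefficient of $(x^k_g - x^*)$ equal to exactly $\beta$ in both regroupings --- and hence, since the coefficients sum to $1$, the coefficient of $(x^k - x^*)$ equal to $1-\beta$ --- and to carry the $(1 + \alpha\gamma\eta)$ factor consistently through all three quadratic terms. The hypotheses $\theta > 0$ and $\eta \ge 1$ enter only to make $\theta$ well-defined and the division by $\theta$ legitimate. Since every step except the single Young's inequality is an exact identity, the bound is tight as $\alpha \to 0$, which serves as a consistency check. As the statement is Lemma~5 of \cite{beznosikov2023first}, one may alternatively just invoke that reference.
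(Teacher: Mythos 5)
Your proof is correct, and it is worth noting that the paper itself does not prove this statement at all: the lemma is imported verbatim as Lemma~5 of \cite{beznosikov2023first}, so the paper's only ``proof'' is the citation. Your reconstruction --- substituting $x^{k+1}_f = x^k_g - p\gamma g^k$ into the $x^{k+1}$ update, using the definition of $\theta$ in the equivalent form $(p-\eta)/\theta = \beta p - \eta$ to collapse the recursion to $x^{k+1}-x^* = \beta(x^k_g-x^*) + (1-\beta)(x^k-x^*) - p\eta\gamma g^k$ (and, equivalently, $a=\tfrac{\eta}{p}(x^k_g-x^*)+(1-\tfrac{\eta}{p})(x^k_f-x^*)$, which reproduces the inner-product term exactly), splitting $\EE_k[g^k]$ into $\nabla r(x^k_g)$ plus a bias, applying Young's inequality with free parameter $\alpha$, and expanding $\|a\|^2$ via the identity $\|\beta u+(1-\beta)v\|^2=\beta\|u\|^2+(1-\beta)\|v\|^2-\beta(1-\beta)\|u-v\|^2$ --- checks out line by line, correctly uses only the update structure of Algorithm~\ref{alg:AGD_ASGD} and the $\mathcal{F}_k$-measurability of $x^k,x^k_g,x^k_f$, and is essentially the computation one finds in the cited reference; so your self-contained derivation is a legitimate alternative to simply invoking it. Two small remarks: absorbing $p\alpha\eta\gamma\|a\|^2$ into the prefactor $(1+\alpha\gamma\eta)$ requires $p\le 1$, which is not listed among the lemma's hypotheses (it does hold for the paper's choice $p=\tfrac14$, and one could instead keep the sharper factor $1+p\alpha\eta\gamma$); and the claim that the bound is ``tight as $\alpha\to 0$'' is not quite right, since the term $\tfrac{p\eta\gamma}{\alpha}\|\EE_k[g^k]-\nabla r(x^k_g)\|^2$ blows up in that limit --- the accurate statement is simply that every step other than the single Young inequality is an exact identity.
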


To use the following lemma, we proved Lemmas \ref{lem:lem:tech_lemma_403} and \ref{lem:lem:tech_lemma_404}. 
\begin{lemma}[Lemma 6 from \cite{beznosikov2023first}]
	\label{lem:tech_lemma_2}
	Let problem \eqref{eq:consensus_problem}  be solved by Algorithm \ref{alg:AGD_ASGD}. Then for any $u \in \R^n$ such that $\sum_{i=1}^n u_{(i)} = \sum_{i=1}^n x^0_{(i)}$, we get
	\begin{align*}
		\EE_k[r(x^{k+1}_f)]
		\leq&
		r(u) - \langle \nabla r(x^k_g), u - x^k_g \rangle - \frac{\lambda_{\min}}{2} \| u - x^k_g\|^2  - \frac{\gamma}{2} \|\nabla r(x^k_g)\|^2 
		\\
		&
		+ \frac{\gamma}{2} \|\EE_k[g^k] - \nabla r(x^k_g) \|^2 + \frac{\lambda_{\max} \gamma^2 }{2}\EE_k[\| g^k\|^2].
	\end{align*}
\end{lemma}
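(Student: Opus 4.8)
The plan is to establish this inequality as a one-step descent estimate for the smooth function $r$, with two twists: the step is taken along the \emph{biased} Markovian estimator $g^k$ rather than the true gradient, and strong convexity is only available on the affine subspace of vectors with a fixed coordinate sum. First I would invoke the $\lambda_{\max}$-smoothness of the objective $r$ of \eqref{eq:consensus_problem}. Since line \ref{line_acc_2} of Algorithm \ref{alg:AGD_ASGD} gives $x^{k+1}_f - x^k_g = -p\gamma g^k$, the quadratic upper bound reads
\begin{equation*}
r(x^{k+1}_f) \leq r(x^k_g) + \langle \nabla r(x^k_g),\, x^{k+1}_f - x^k_g \rangle + \frac{\lambda_{\max}}{2}\|x^{k+1}_f - x^k_g\|^2 ,
\end{equation*}
so that the last term already produces the $\frac{\lambda_{\max}\gamma^2}{2}\|g^k\|^2$ contribution after substituting the update and taking expectations.

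Next I would take the conditional expectation $\EE_k[\cdot]$. By Lemma \ref{lem:lem:tech_lemma_0} the quantity $\EE_k[g^k] = \EE_k[g^k_{\lfloor \log_2 \batchbound \rfloor}]$ is well defined, and the linear term becomes $-p\gamma\langle \nabla r(x^k_g), \EE_k[g^k]\rangle$. The essential step is to separate the \emph{true} gradient from the \emph{bias}: writing $\EE_k[g^k] = \nabla r(x^k_g) + \big(\EE_k[g^k] - \nabla r(x^k_g)\big)$ and applying Young's inequality to the inner product of $\nabla r(x^k_g)$ with the bias $\EE_k[g^k] - \nabla r(x^k_g)$ produces exactly the two terms $-\frac{\gamma}{2}\|\nabla r(x^k_g)\|^2$ and $+\frac{\gamma}{2}\|\EE_k[g^k] - \nabla r(x^k_g)\|^2$ of the target inequality, the remaining genuine-gradient contribution combining with $r(x^k_g)$ in the following step.

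Then I would eliminate the leftover $r(x^k_g)$ together with the residual $\langle \nabla r(x^k_g), x^k_g\rangle$-type term by the strong-convexity estimate of Lemma \ref{lem:lem:tech_lemma_404}. To apply it with the arbitrary comparison point $u$ I must check the sum condition $\sum_{i=1}^n (x^k_g)_{(i)} = \sum_{i=1}^n u_{(i)}$; this follows from Lemma \ref{lem:lem:tech_lemma_403}, which guarantees $\sum_{i}(x^k_g)_{(i)} = \sum_i x^0_{(i)}$, together with the hypothesis $\sum_i u_{(i)} = \sum_i x^0_{(i)}$. Lemma \ref{lem:lem:tech_lemma_404} then yields $r(x^k_g) \leq r(u) - \langle \nabla r(x^k_g), u - x^k_g\rangle - \frac{\lambda_{\min}}{2}\|u - x^k_g\|^2$, contributing precisely the $r(u)$, $-\langle \nabla r(x^k_g), u - x^k_g\rangle$ and $-\frac{\lambda_{\min}}{2}\|u - x^k_g\|^2$ terms. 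Collecting all contributions and using the parameter choices to reconcile the step-size constants gives the stated bound.

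The main obstacle is the biasedness of $g^k$: because the batches are generated along a Markov chain, $\EE_k[g^k]$ does not equal $\nabla r(x^k_g)$, so the usual unbiased-SGD descent argument does not apply directly, and the bias must be carried as the separate term $\frac{\gamma}{2}\|\EE_k[g^k] - \nabla r(x^k_g)\|^2$, to be controlled later via the second estimate of Lemma \ref{lem:lem:tech_lemma_0}. A secondary but crucial point is that $r$ is \emph{not} strongly convex on all of $\R^n$, being flat along $\ker \tilde W = \mathrm{span}\{\mathbf 1\}$; the $\lambda_{\min}$ term is recovered only after restricting to the fixed-sum affine subspace, which is exactly why the preparatory Lemmas \ref{lem:lem:tech_lemma_403} and \ref{lem:lem:tech_lemma_404} are needed to legitimately invoke it.
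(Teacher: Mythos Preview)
Your proposal is correct and is essentially a faithful reconstruction of the standard descent argument that the paper imports from \cite{beznosikov2023first}; the paper does not give its own proof of this lemma but merely cites it, noting only that Lemmas \ref{lem:lem:tech_lemma_403} and \ref{lem:lem:tech_lemma_404} are what make the strong-convexity step go through in the present setting---precisely the adaptation you single out.

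One small remark: the lemma as stated carries a generic stepsize $\gamma$, whereas line \ref{line_acc_2} of Algorithm \ref{alg:AGD_ASGD} uses $p\gamma$. This is not something you can ``reconcile by parameter choices''; rather, the lemma is meant to be read for the abstract update $x^{k+1}_f = x^k_g - \gamma g^k$ and is then \emph{applied} with $\gamma \mapsto p\gamma$ in the proof of Theorem \ref{th:conv_markov_acc} (where the bounds indeed appear with $p\gamma$ and $p^2\gamma^2$). Your argument becomes literally correct once you carry out the smoothness and Young-inequality steps with the generic $\gamma$; everything else stands as written.
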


\begin{proof}[Proof of Theorem \ref{th:conv_markov_acc}]

	With Lemmas \ref{lem:lem:tech_lemma_403} and \ref{lem:lem:tech_lemma_404}, one can use Lemma \ref{lem:lem:tech_lemma_404}, Lemma \ref{lem:lem:tech_lemma_404} \ref{lem:tech_lemma_1} with $u =  x^*$, $u = x^k_f$ and get
	\begin{align*}
		\EE_k[r(x^{k+1}_f)]
		\leq&
		r(x^*) - \langle \nabla r(x^k_g),  x^* - x^k_g \rangle - \frac{\lambda_{\min}}{2} \|  x^* - x^k_g\|^2  - \frac{p \gamma}{2} \|\nabla r(x^k_g)\|^2 \\
		&+ \frac{p \gamma}{2} \|\EE_k[g^k] - \nabla r(x^k_g) \|^2 + \frac{\lambda_{\max} p^2 \gamma^2 }{2}\EE_k[\| g^k\|^2],
	\end{align*}
	\begin{align*}
		\EE_k[r(x^{k+1}_f)]
		\leq&
		r(x^k_f) - \langle \nabla r(x^k_g), x^k_f - x^k_g \rangle - \frac{\lambda_{\min}}{2} \| x^k_f - x^k_g\|^2  - \frac{p \gamma}{2} \|\nabla r(x^k_g)\|^2 
		\\
		&+ \frac{p \gamma}{2} \|\EE_k[g^k] - \nabla r (x^k_g) \|^2 + \frac{\lambda_{\max} p^2 \gamma^2 }{2}\EE_k[\| g^k\|^2].
	\end{align*}
	Summing the first inequality with coefficient $2 p\gamma \eta  $, the second with coefficient $2 \gamma \eta (\eta - p) $ and the estimate from Lemma \ref{lem:tech_lemma_1}, we obtain
	\begin{align*}
		\EE_k\big[\|x^{k+1} &-  x^*\|^2 + 2 \gamma \eta^2 r(x^{k+1}_f)\big]
		\\
		\notag \leq&
		(1 + \alpha \gamma \eta)( 1 - \beta) \| x^k -  x^*\|^2 + (1 + \alpha \gamma \eta) \beta\|x^k_g -  x^*\|^2 
		\\
		&+ (1 + \alpha \gamma \eta) (\beta^2 - \beta )\|x^k - x^k_g\|^2 
		\\
		&
		- 2 \eta^2 \gamma \langle \nabla r(x^k_g), x^k_g + \left(\frac{p}{\eta} - 1\right)  x^k_f - \frac{p}{\eta}  x^*\rangle 
		\\
		&+ p^2 \eta^2 \gamma^2 \EE_{k}[\| g^k \|^2] + \frac{p \eta \gamma}{\alpha} \|\EE_k[g^k] - \nabla r(x^k_g)\|^2
		\\
		& + 2 p \gamma \eta \Big (r( x^*) - \langle \nabla r(x^k_g),  x^* - x^k_g \rangle - \frac{\lambda_{\min}}{2} \| x^* - x^k_g\|^2  - \frac{p\gamma}{2} \|\nabla r(x^k_g)\|^2 
		\\
		&+ \frac{p\gamma}{2} \|\EE_k[g^k] - \nabla r(x^k_g) \|^2 + \frac{\lambda_{\max} p^2 \gamma^2 }{2}\EE_k[\| g^k\|^2]\Big)
		\\
		& + 2 \gamma \eta (\eta - p) \Big ( r(x^k_f) - \langle \nabla r(x^k_g), x^k_f - x^k_g \rangle - \frac{\lambda_{\min}}{2} \| x^k_f - x^k_g\|^2  - \frac{p \gamma}{2} \|\nabla r(x^k_g)\|^2 
		\\
		&+ \frac{p \gamma}{2} \|\EE_k[g^k] - \nabla r(x^k_g) \|^2 + \frac{\lambda_{\max} p^2\gamma^2 }{2}\EE_k[\| g^k\|^2]  
		\Big)
		\\
		\notag =&
		(1 + \alpha \gamma \eta)( 1 - \beta) \| x^k -  x^*\|^2 + 2 \gamma \eta \left( \eta - p\right) r(x^{k}_f) - 2 p \gamma \eta r( x^*)
		\\
		&\notag
		+ \left((1 + \alpha \gamma \eta) \beta - p\gamma \eta \lambda_{\min}\right)\|x^k_g -  x^*\|^2
		\\
		&\notag
		+ (1 + \alpha \gamma \eta) (\beta^2 - \beta )\|x^k - x^k_g\|^2 
		- p \gamma^2 \eta^2 \|\nabla r(x^k_g)\|^2 
		\\
		&\notag
		+ \left( \frac{p \eta \gamma}{\alpha} + p \gamma^2 \eta^2 \right) \|\EE_k[g^k] - \nabla r(x^k_g) \|^2 + \left( p^2 \eta^2 \gamma^2 + p^2 \gamma^3 \eta^2 \lambda_{\max} \right) \EE_k[\| g^k\|^2]
		\\
		\notag \leq&
		(1 + \alpha \gamma \eta)( 1 - \beta) \| x^k -  x^*\|^2 + 2 \gamma \eta \left( \eta - p\right)  r(x^{k}_f) - 2 p \gamma \eta r( x^*)
		\\
		&\notag
		+ \left((1 + \alpha \gamma \eta) \beta - p\gamma \eta \lambda_{\min}\right)\|x^k_g -  x^*\|^2
		\\
		&\notag
		+ (1 + \alpha \gamma \eta) (\beta^2 - \beta )\|x^k - x^k_g\|^2 
		- p \gamma^2 \eta^2 \|\nabla r(x^k_g)\|^2 
		\\
		&\notag
		+ p \eta \gamma \left( \frac{1}{\alpha} + \gamma \eta \right) \|\EE_k[g^k] - \nabla r(x^k_g) \|^2 
		\\
		&\notag
		+ 2 p^2 \eta^2 \gamma^2 \left(  1 +  \gamma \lambda_{\max} \right) \EE_k[\| g^k - \nabla r(x^k_g)\|^2] 
		\\
		&\notag
		+ 2 p^2 \eta^2 \gamma^2 \left(  1 +  \gamma \lambda_{\max} \right) \EE_k[\| \nabla r(x^k_g) \|^2]\,.
	\end{align*}
	In the last step we also used Cauchy Schwartz inequality. The choice of $\alpha = \frac{\beta}{2\eta \gamma}$ gives $(1 + \alpha \eta \gamma) (1 - \beta) = \left(1 + \frac{\beta}{2}\right) \left( 1 - \beta\right) \leq \left( 1 - \frac{\beta}{2}\right)$ and then
	\begin{align*}
		\EE_k[\|x^{k+1} &-  x^*\|^2 + 2 \gamma \eta^2 r(x^{k+1}_f)]
		\\
		\notag \leq&
		\left( 1 - \frac{\beta}{2} \right) \| x^k -  x^*\|^2 + 2 \gamma \eta \left( \eta - p\right)  r(x^{k}_f) - 2 p \gamma \eta r( x^*)
		\\
		&\notag
		+ \left( \left(1 + \frac{\beta}{2} \right) \beta - p\gamma \eta \lambda_{\min}\right)\|x^k_g -  x^*\|^2
		+ \left(1 + \frac{\beta}{2} \right) (\beta^2 - \beta )\|x^k - x^k_g\|^2 
		\\
		&\notag
		+ p \eta^2 \gamma^2 \left( 1 + \frac{2}{\beta} \right) \|\EE_k[g^k] - \nabla r(x^k_g) \|^2
		\\
		&\notag
		+ 2 p^2 \eta^2 \gamma^2 \left(  1 +  \gamma \lambda_{\max} \right) \EE_k[\| g^k - \nabla r(x^k_g)\|^2] 
		\\
		&\notag
		- p \eta^2 \gamma^2 \left(1 -  2p\left(  1 +  \gamma \lambda_{\max} \right) \right) \EE_k[\| \nabla r(x^k_g) \|^2]\,.
	\end{align*}
	Subtracting $2\gamma \eta^2 r( x^*)$ from both sides, we get
	\begin{align*}
		\EE_k[\|x^{k+1} &-  x^*\|^2 + 2 \gamma \eta^2 (r(x^{k+1}_f) - r( x^*))]
		\\
		\notag \leq&
		\left( 1 - \frac{\beta}{2} \right) \| x^k -  x^*\|^2 + 2 \gamma \eta^2 \left( 1 - \frac{p}{\eta} \right)  (r(x^{k}_f) -  r( x^*))
		\\
		&\notag
		+ \left( \left(1 + \frac{\beta}{2} \right) \beta - p\gamma \eta \lambda_{\min}\right)\|x^k_g -  x^*\|^2
		+ \left(1 + \frac{\beta}{2} \right) (\beta^2 - \beta )\|x^k - x^k_g\|^2 
		\\
		&\notag
		+ p \eta^2 \gamma^2 \left( 1 + \frac{2}{\beta} \right) \|\EE_k[g^k] - \nabla r(x^k_g) \|^2 \\
		&\notag
		+ 2 p^2 \eta^2 \gamma^2 \left(  1 +  \gamma \lambda_{\max} \right) \EE_k[\| g^k - \nabla r(x^k_g)\|^2] 
		\\
		&\notag
		- p \eta^2 \gamma^2 \left(1 -  2p\left(  1 +  \gamma \lambda_{\max} \right) \right) \EE_k[\| \nabla r(x^k_g) \|^2]\,.
	\end{align*}
	Applying Lemma \ref{lem:lem:tech_lemma_0}, one can obtain
	\begin{align*}
		\EE_k[\|x^{k+1} &-  x^*\|^2 + 2 \gamma \eta^2 (r(x^{k+1}_f) - r( x^*))]
		\\
		\notag \leq&
		\left( 1 - \frac{\beta}{2}\right) \| x^k -  x^*\|^2 + 2 \gamma \eta^2 \left( 1 - \frac{p}{\eta} \right)  (r(x^{k}_f) -  r( x^*))
		\\
		&\notag
		+ \left( \left(1 + \frac{\beta}{2} \right) \beta - p\gamma \eta \lambda_{\min}\right)\|x^k_g -  x^*\|^2
		+ \left(1 + \frac{\beta}{2} \right) (\beta^2 - \beta )\|x^k - x^k_g\|^2 
		\\
		&\notag
		+ p \eta^2 \gamma^2 \left( 1 + \frac{2}{\beta} \right) \cdot 86\taumix^2 \batchbound^{-2}B^{-2} \rho^2 \| x^k_g -  x^*\|^2 
		\\
		&\notag
		+ 2 p^2 \eta^2 \gamma^2 \left(  1 +  \gamma \lambda_{\max} \right) \cdot 102 \left(\taumix B^{-1}\log_2 \batchbound + \taumix^2 B^{-2}\right) \rho^2 \| x^k_g -  x^*\|^2
		\\
		&\notag
		- p \gamma^2 \eta^2 ( 1 - 2 p (  1 +  \gamma \lambda_{\max}))  \| \nabla r(x^k_g) \|^2\,.
	\end{align*}
	With $M \geq \sqrt{\frac{1 + 2/\beta}{p}}$, we have
	\begin{align*}
		\EE_k[\|x^{k+1} &-  x^*\|^2 + 2 \gamma \eta^2 (r(x^{k+1}_f) - r( x^*))]
		\\
		\notag \leq&
		\left( 1 - \frac{\beta}{2}\right) \| x^k -  x^*\|^2 + 2 \gamma \eta^2 \left( 1 - \frac{p}{\eta} \right)  (r(x^{k}_f) -  r( x^*))
		\\
		&\notag
		+ \left(\left(1 + \frac{\beta}{2} \right) \beta - p\gamma \eta \lambda_{\min}\right)\|x^k_g -  x^*\|^2
		+ \left(1 + \frac{\beta}{2} \right) (\beta^2 - \beta )\|x^k - x^k_g\|^2 
		\\
		&\notag
		+ 300 p^2 \eta^2 \gamma^2 \rho^2  \left(  1 +  \gamma \lambda_{\max} \right) \left(\taumix B^{-1}\log_2 \batchbound + \taumix^2 B^{-2}\right) \| x^k_g -  x^*\|^2
		\\
		&\notag
		- p \gamma^2 \eta^2 ( 1 - 2 p (  1 +  \gamma \lambda_{\max}))  \| \nabla r(x^k_g) \|^2\,.
	\end{align*}
	With $\gamma \leq \tfrac{3}{4\lambda_{\max}}$, using that $p = \tfrac{1}{4}$, $\beta  = \sqrt{\frac{4p^2 \lambda_{\min} \gamma}{3}}$, and $p \lambda_{\min} \gamma \eta = 3\beta$, one can obtain
	\begin{eqnarray*}
		&\beta  = \sqrt{\frac{4p^2 \lambda_{\min} \gamma}{3}} \leq \sqrt{ \frac{p^2 \lambda_{\min}}{\lambda_{\max}}} \leq 1, \\
		& 1 + \gamma \lambda_{\max} \leq 2, \quad -( 1 -  2p(  1 +  \gamma \lambda_{\max})) \leq 0,\\
		&\left( \left(1 + \frac{\beta}{2} \right) \beta - p\gamma \eta \lambda_{\min}\right) = \left( \beta + \frac{\beta^2}{2} - p \lambda_{\min} \gamma \eta \right) \leq \left(\frac{3 \beta}{2} - p \lambda_{\min} \gamma \eta \right) \leq - \frac{p \lambda_{\min} \gamma \eta}{2}.
	\end{eqnarray*}
	and, therefore,
	\begin{align*}
		\EE_k[\|x^{k+1} &-  x^*\|^2 + 2 \gamma \eta^2 (r(x^{k+1}_f) - r( x^*))]
		\\
		\notag \leq&
		\left( 1 - \frac{\beta}{2} \right) \| x^k -  x^*\|^2 + 2 \gamma \eta^2 \left( 1 - \frac{p}{\eta} \right)  (r(x^{k}_f) -  r( x^*))
		\\
		&\notag
		+ \left( - \frac{p\gamma \eta \lambda_{\min}}{2}  + 300 p^2 \eta^2 \gamma^2 \rho^2  \left(\taumix B^{-1}\log_2 \batchbound + \taumix^2 B^{-2}\right)\right)\|x^k_g -  x^*\|^2\,.
	\end{align*}
	Since $p=\frac{1}{4}$, $\eta = \sqrt{\frac{12}{\lambda_{\min} \gamma}}$, $\gamma \leq \tfrac{\lambda_{\min}^3}{[1800 \rho^2 \left(\taumix b^{-1}  + \taumix^2 b^{-2}\right)]^{2}}$ and $B = \lceil b \log_2 \batchbound \rceil$, we get
	\begin{eqnarray*}
		&\gamma 
		\leq 
		\frac{\lambda_{\min}^3}{[1800 \rho^2 \left(\taumix b^{-1}  + \taumix^2 b^{-2}\right)]^{2}} 
		\leq \frac{\lambda_{\min}^3}{[1800 \rho^2 \left(\taumix B^{-1} \log_2 \batchbound + \taumix^2 B^{-2}\right)]^{2}} 
		,\\
		&\left( - \frac{p\gamma \eta \lambda_{\min}}{2}  + 300 p^2 \eta^2 \gamma^2 \rho^2  \left(\taumix B^{-1}\log_2 \batchbound + \taumix^2 B^{-2}\right)\right)
		\\
		&\hspace{0.8cm}=
		\frac{p\gamma \eta }{2} \left( - \lambda_{\min}  + 150 \eta \gamma \rho^2  \left(\taumix B^{-1}\log_2 \batchbound + \taumix^2 B^{-2}\right)\right)
		\\
		&\hspace{2.4cm}=
		\frac{p\gamma \eta }{2} \left(-\lambda_{\min}  + \sqrt{\frac{12 \cdot 150^2 \gamma}{\lambda_{\min}}} \cdot \rho^2 \left(\taumix B^{-1}\log_2 \batchbound + \taumix^2 B^{-2}\right)\right)
		\leq 0,
	\end{eqnarray*}
	and, then,
	\begin{align*}
		\EE_k[\|x^{k+1} -  x^*\|^2 &+ 2 \gamma \eta^2 (r(x^{k+1}_f) - r( x^*))]
		\\
		\notag \leq&
		\left( 1 - \frac{\beta}{2} \right) \| x^k -  x^*\|^2 + 2 \gamma \eta^2 \left( 1 - \frac{p}{\eta} \right)  (r(x^{k}_f) -  r( x^*))\,.
	\end{align*}
	Using that $\eta = \sqrt{\frac{12}{\lambda_{\min} \gamma}} $ and $\frac{\beta}{2} = \frac{p}{\eta}$, we have
	\begin{align*}
		\EE_k \bigg[\|x^{k+1} -  x^*\|^2 &+ \frac{24}{\lambda_{\min}} (r(x^{k+1}_f) - r( x^*)) \bigg]
		\\
		\notag \leq&
		\left( 1 - \frac{\beta}{2} \right) \left[\| x^k -  x^*\|^2 + \frac{24}{\lambda_{\min}}  (r(x^{k}_f) -  r( x^*)) \right]\,.
	\end{align*}
	Substituting of $\beta = \sqrt{\frac{4p^2 \lambda_{\min} \gamma}{3}}$, we have
    \begin{align*}
		\EE_k \bigg[\|x^{k+1} -  x^*\|^2 &+ \frac{24}{\lambda_{\min}} (r(x^{k+1}_f) - r( x^*)) \bigg]
		\\
		\notag \leq&
		\left( 1 - \sqrt{\frac{p^2 \lambda_{\min} \gamma}{3}} \right) \left[\| x^k -  x^*\|^2 + \frac{24}{\lambda_{\min}}  (r(x^{k}_f) -  r( x^*)) \right]\,.
	\end{align*}
	Taking the full expectation and running the recursion finish the proof.
%	\EndProof
\end{proof}

\section{Lower bounds} \label{sec:lower}

\subsection{First-order decentralized algorithms}
Similar to the works \cite{scaman2017optimal, metelev2023consensus, kovalev2021lower} we will impose some conditions on the optimization algorithm. We will call the class of algorithms satisfying these conditions \textit{ first-order decentralized algorithms}.
Each algorithm in this class has two types of iterations: communicational and local. In the communicational iteration, the nodes communicate with each other, while in the local iteration, they perform computations on their local memory. For each time step $k\in\N$ we will call $\cH_i(k)$ the local memory of the node $i$. Also for each time step $k\in\N$, denote the last preceding communication time by $q(k)$.
%\avpcomm{Is memory really a set (so we are not interested in the order in which elements are added) or a vector?}
1. If nodes perform a local computation at step $k$, local information is updated as
    \begin{align*}
    \cH_i(k+1)\subseteq \spn\cbraces{\braces{x, \nabla f_i(x), \nabla f_i^*(x):~ x\in\cH_i(k)}}
    \end{align*}
    for all $i = 1, \ldots, n$. Here $f^*(y)=\sup_x\{y^T x-f(x)\}$ is the Fenchel's dual function.
    
2. If the nodes perform a communication round at time step $k$, local information is updated as
    \begin{align*}
    \cH_i(k+1)\subseteq \spn\cbraces{\bigcup_{j\in\cN_i^{q(k)}\cup\{i\}} \cH_j(k)}
    \end{align*}
    for all $i = 1, \ldots, n$. Here $\cN_i^{q(k)}$ is a set of neighbors of node $i$ at time step $q(k)$, i.e. at the time of last communication.

%\avpcomm{What bothers me most here: in this definition, nothing is said about $W$, but $\chi$ depends on the matrix, not on the graph. Why does it arise in the theorem?}

\subsection{Overview of main result}

In order to establish a new lower bound, we will employ a slightly different concept than that found in previous works \cite{scaman2017optimal, metelev2023consensus, kovalev2021lower}. The fundamental idea behind such lower bound approaches is to construct a counterexample of a time-varying network, in which information flows slowly from one large cluster to another, while maintaining a modest characteristic number for the network. In previous studies \cite{scaman2017optimal,kovalev2021lower}, this was achieved by utilizing classical (unweighted) graph Laplacians corresponding to the network. In our novel approach, we employ a weighted Laplacian to construct a counterexample.

To estimate the characteristic number of a graph, it is necessary to evaluate both the largest and smallest nonzero eigenvalues of the Laplacian. Although maximal eigenvalue can be easily estimated, difficulties arise in determining the minimal positive eigenvalue. The literature contains numerous typical graphs (such as paths, stars, and complete binary trees) for which lower nonzero eigenvalues of Laplacians have been calculated, and these can be employed for counterexamples. Our approach, which is based on weighted Laplacians, allows to utilize previously inaccessible topologies.

Now we formulate our result on lower bounds as follows. The proof will be provided in the forthcoming sections.
\begin{theorem}\label{thm:lower_bounds}
    For any $\chi\geq 56,~ L>16\mu>0$ and any first-order optimization method $\cM$ there exists a set of $L$-smooth and $\mu$-strongly convex functions $\braces{f_i}_{i=1}^m$, a sequence of graphs $\braces{\cG_k = (\cV, \cE_k)}_{k=0}^\infty$ and a sequence of corresponding gossip matrices $\braces{W(\cG_k)}_{k=0}^\infty$ such that for each $k = 0, 1, \ldots$ it holds $\chi(W(\cG_k)) = \chi$ and
    \begin{equation*}
        \norm{x_k-x_*}^2 \ge \left(1-4\sqrt{\frac{\mu}{L}}\right)^{\frac{72k}{7\chi}+2}\norm{x_0-x_*}^2.
    \end{equation*}
\end{theorem}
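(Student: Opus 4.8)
The plan is to adapt the classical lower-bound recipe---an adversarial communication graph paired with a separable ``hard'' quadratic---to the two features forced by the statement: the graph must genuinely vary in time, changing by at most two edges per step, and each snapshot's gossip matrix $W(\cG_k)$ must be a \emph{weighted} Laplacian, which is precisely what lets the quotient $\lambda_{\max}(W(\cG_k))/\lambda_{\min}^{+}(W(\cG_k))$ be pinned at the prescribed value $\chi$ while the \emph{effective} distance over which information must travel between two node clusters is kept as large as $\Theta(\chi)$. The target exponent $\tfrac{72k}{7\chi}+2$ on the base $1-4\sqrt{\mu/L}$ dictates the bookkeeping: at most one ``useful'' coordinate may be unlocked per $D:=\tfrac{7\chi}{72}$ communication rounds. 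The hypothesis $\chi\ge 56$ is what makes $D\ge 5$, so the construction is nonvacuous, and $L>16\mu$ is exactly what keeps $1-4\sqrt{\mu/L}$ positive so that the bound is nontrivial.

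\textbf{Step 1: the network (the crux).} I would fix a base topology on $m$ vertices, split into clusters $\cV_0,\cV_1$, in which no ``token'' of information can move from $\cV_0$ to $\cV_1$ in fewer than $D$ communication steps, and then choose positive edge weights so that the corresponding Laplacian $W$ satisfies $\lambda_{\min}^{+}(W)=\lambda_{\max}(W)/\chi$ \emph{exactly}. Bounding $\lambda_{\max}$ from above is routine (e.g.\ by twice the largest weighted degree, or by Gershgorin's theorem); the delicate point---flagged in the overview as the reason for passing from unweighted to weighted Laplacians---is the matching lower bound on $\lambda_{\min}^{+}$, for which I would take the topology from a family whose weighted Fiedler value is computable and tune the weights to land it on $\lambda_{\max}/\chi$. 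The sequence $\{\cG_k\}$ is then obtained by sliding the bridging structure between $\cV_0$ and $\cV_1$ one notch per time step, which costs at most two edge changes, leaves every $\cG_k$ isomorphic to the base graph (so $\chi(W(\cG_k))=\chi$ for all $k$), and keeps the cluster-to-cluster information distance at least $D$ throughout---something no static graph of condition number $\chi$ can achieve, which is why time variation is essential here.

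\textbf{Step 2: the functions.} On $\R^d$ with $d$ large, take the Nesterov/Scaman worst-case quadratic $\phi(x)=\tfrac{L-\mu}{8}\bigl(x^{\top}Tx-2e_1^{\top}x\bigr)+\tfrac{\mu}{2}\|x\|^2$ with $T=\mathrm{tridiag}(-1,2,-1)$, and split $T$ into its ``odd'' and ``even'' couplings: assign one half (together with the linear term and $\tfrac{\mu}{2}\|\cdot\|^2$) to the nodes of $\cV_0$, the other half (with $\tfrac{\mu}{2}\|\cdot\|^2$) to the nodes of $\cV_1$, and only $\tfrac{\mu}{2}\|\cdot\|^2$ to all remaining nodes, rescaled so that $\tfrac1m\sum_i f_i$ is a positive multiple of $\phi$. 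Then each $f_i$ is $\mu$-strongly convex and $L$-smooth, both $\nabla f_i$ and $\nabla f_i^{*}$ enlarge the coordinate support by at most one, and the minimizer $x_*$ of $\phi$ has $(x_*)_j=q^j$ with $q=\tfrac{\sqrt{\kappa}-1}{\sqrt{\kappa}+1}$, $\kappa=L/\mu$. Take $x_0=0$, so $\|x_0-x_*\|^2=\|x_*\|^2$.

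\textbf{Steps 3--4: slow progress and conclusion.} Arguing by induction on the time step and using the two admissible update rules of a first-order decentralized algorithm, I would show that every node's local memory $\cH_i(k)$ stays inside $\spn(e_1,\dots,e_{N(k)})$, where $N$ is a counter that can increase only after fresh information has crossed from one cluster to the other---hence only after at least $D$ more communication rounds have elapsed---so that $N(k)\le \tfrac{72k}{7\chi}+2$. Consequently $x_k$ lies in this subspace and, since $(x_*)_j=q^j$, we get $\|x_k-x_*\|^2\ge\sum_{j>N(k)}q^{2j}=q^{2N(k)}\|x_*\|^2$; combining with the elementary inequality $q^2=\bigl(\tfrac{\sqrt{\kappa}-1}{\sqrt{\kappa}+1}\bigr)^2\ge 1-4\sqrt{\mu/L}$ (valid for all $\kappa\ge1$) and $\|x_0-x_*\|^2=\|x_*\|^2$ yields the claimed bound. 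I expect Steps 2--4 to be conceptually routine---the only real care is redoing the support-tracking argument for \emph{time-varying} neighborhoods and tracking constants so that the additive slack in the exponent is at most $2$; the genuine obstacle is Step 1, namely exhibiting a weighted topology whose effective cluster-to-cluster distance is as large as $\Theta(\chi)$ (impossible statically) while forcing the condition number to equal $\chi$ exactly on every snapshot, all with only two edge changes between consecutive graphs.
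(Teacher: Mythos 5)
There is a genuine gap, and you have identified it yourself: Step 1, the actual construction, is missing, and it is precisely the content of the paper's proof. Your Steps 2--4 (Nesterov-type odd/even splitting of the hard quadratic between two vertex sets, the support-tracking argument for time-varying neighborhoods, and the final estimate via $\bigl(\tfrac{\sqrt{\kappa}-1}{\sqrt{\kappa}+1}\bigr)^2\ge 1-4\sqrt{\mu/L}$) match the paper and are indeed routine. But your plan asks for snapshots in which the two clusters are at ``effective distance'' $\Theta(\chi)$, and you offer only the vague device of ``sliding the bridging structure one notch per step.'' This cannot work as stated: in any single snapshot with condition number $\chi$, the graph distance between the clusters is at most $O(\sqrt{\chi})$, so every snapshot contains a short path between $\cV_0$ and $\cV_1$, and you give no mechanism by which two edge changes per round prevent information from using it. The paper's snapshots in fact have \emph{constant} diameter (the two-star graph $T_{a,b}$, diameter $4$); the $\Theta(\chi)$ delay is created purely by the adversarial schedule: the unique cut vertex joining the two star centers is, at every iteration, absorbed into the receiving side and replaced by a fresh, still-uninformed leaf taken from the other side (this is exactly the two-edge change), so the center of the target star never hears the news until the supply of uninformed leaves is exhausted, which takes $t\ge n$ rounds per phase.

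The second missing ingredient is the spectral one. With unweighted Laplacians this two-star topology would have condition number much larger than the delay $n$, ruining the ratio $t/\chi$; the paper fixes this with a separate lemma (the shortest-path weighting, Lemma~\ref{chi_upper}) showing that any graph of diameter $D$ on $n$ vertices admits edge weights with $\chi(L(\cG_A))\le 2Dn$, which for the constant-diameter snapshots gives $\chi=O(n)$ and hence delay $\Theta(\chi)$. Finally, equality $\chi(W(\cG_k))=\chi$ is not obtained by isomorphism of snapshots (the snapshots $T_{a,b}$ are not isomorphic to each other) but by a continuity argument: shrinking one edge weight toward zero drives the condition number continuously up, so it can be tuned to hit the prescribed $\chi$ exactly, with the $7/9$ rounding loss producing the $\tfrac{72}{7\chi}$ exponent. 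Without the rotating-connector mechanism and the weighting lemma, your Step 3 bound $N(k)\le\tfrac{72k}{7\chi}+2$ has no justification, so the proposal does not yet prove the theorem.
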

\begin{corollary}
    For any $\chi\geq 56,~ L>16\mu>0$ and any first-order optimization method $\cM$ there exists a set of $L$-smooth and $\mu$-strongly convex functions $\braces{f_i}_{i=1}^m$, a sequence of graphs $\braces{\cG_k = (\cV, \cE_k)}_{k=0}^\infty$ and a sequence of corresponding gossip matrices $\braces{W(\cG_k)}_{k=0}^\infty$ such that for each $k = 0, 1, \ldots$ it holds $\chi(W(\cG_k)) = \chi$ and method $\cM$ requires at least $\Omega(\chi\sqrt{L/\mu}\log(1/\eps))$ communication rounds.
\end{corollary}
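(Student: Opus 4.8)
The plan is to follow the now-classical "decentralized worst-case quadratic" template of \cite{scaman2017optimal,kovalev2021lower,metelev2023consensus}, with the weighted Laplacian of the present paper playing the role that unweighted path/tree Laplacians played there. Concretely, I would fix $x_0=0$ and take each $f_i$ to be a shifted copy of a Nesterov-type quadratic $x\mapsto \frac{\mu}{2}\|x\|^2+\frac{L-\mu}{8}\langle A x,x\rangle-\langle b_i,x\rangle$ with $A$ tridiagonal, split across the two "clusters" of the counterexample graph so that the rows of $A$ with odd index are owned by one cluster and the rows with even index by the other (the standard even/odd splitting that forces the coordinate frontier to advance only when information crosses the cut). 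Each $f_i$ is then $\mu$-strongly convex and $L$-smooth, and the global minimizer $x_*$ of $f$ in \eqref{main_problem} has geometrically decaying coordinates, so that for every $x\in\spn\{e_1,\dots,e_t\}$ one has $\|x-x_*\|^2\ge \rho^{2t}\|x_*\|^2$ with $\rho=\frac{\sqrt L-\sqrt\mu}{\sqrt L+\sqrt\mu}\ge 1-2\sqrt{\mu/L}$; since $\rho^2=(1-2\sqrt{\mu/L})^2\ge 1-4\sqrt{\mu/L}$, this is exactly the base appearing in the statement, and the hypotheses $L>16\mu$, $\chi\ge 56$ just keep $1-4\sqrt{\mu/L}\in(0,1)$ and make the numeric constants clean.

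Second, I would pin down the graph sequence $\{\cG_k\}$: it consists of a fixed "skeleton" plus one bridging edge whose endpoint walks by one vertex per communication round, so that passing from $\cG_k$ to $\cG_{k+1}$ deletes one edge and adds one edge --- exactly two altered edges. The weighted Laplacian is used to choose edge weights (in particular, a small weight on whatever edge keeps the graph connected) so that, \emph{for every $k$}, two facts hold simultaneously: (i) $\lambda_{\max}(W(\cG_k))/\lambda_{\min}^+(W(\cG_k))=\chi$, using the explicitly known spectrum of the underlying topology; and (ii) the bridge completes a full sweep across the relevant cluster in $\Theta(\chi)$ communication rounds, and one such sweep lets the local memories gain at most one new coordinate. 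I would isolate (ii) as a purely combinatorial lemma about how far "reachability along currently-present edges, accumulated over rounds" can propagate under the given bridge schedule.

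Third comes the span/reachability induction. Writing $\cH_i(k)$ for the local memory, I would prove by induction over the algorithm's steps that there is a nondecreasing frontier $t(k)$ with $t(k)\le \frac{72k}{7\chi}+2$ such that $\cH_i(k)\subseteq\spn\{e_1,\dots,e_{t(k)}\}$ for all $i$: local steps do not enlarge this span because $A$ is banded and the Fenchel conjugate of a quadratic is again quadratic (so each $\nabla f_i^*$ is affine with the same support pattern), while a communication step can only spread a coordinate along currently-present edges, whose aggregate effect over $k$ rounds is bounded by lemma (ii). Since every point the method can output at step $k$ --- in particular $x_k$ --- lies in $\spn\{e_1,\dots,e_{t(k)}\}$, the tail bound of the first paragraph gives $\|x_k-x_*\|^2\ge \rho^{2t(k)}\|x_0-x_*\|^2\ge \left(1-4\sqrt{\mu/L}\right)^{\frac{72k}{7\chi}+2}\|x_0-x_*\|^2$, which is the theorem. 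The corollary then follows by setting the right-hand side equal to $\eps\|x_0-x_*\|^2$ and solving for $k$, using $\log\frac{1}{1-4\sqrt{\mu/L}}\asymp\sqrt{\mu/L}$, to get $k=\Omega\!\left(\chi\sqrt{L/\mu}\log(1/\eps)\right)$.

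The main obstacle, and where the weighted-Laplacian idea earns its keep, is establishing (i) and (ii) together under the two-edges-per-step constraint. Controlling $\lambda_{\min}^+(W(\cG_k))$ uniformly over all bridge positions requires choosing an underlying topology (path, star, or complete binary tree --- the candidates mentioned in the overview) whose Laplacian spectrum is explicitly known and either insensitive to, or uniformly lower-bounded as, the bridge slides; simultaneously the same weighting must keep $\lambda_{\max}$ fixed and make the information bottleneck as severe as claimed, namely one coordinate per $\Theta(\chi)$ rounds rather than the per-$\Theta(\sqrt\chi)$ rate a static path would give. Extracting the explicit constant $\tfrac{72}{7}$ is then careful but routine bookkeeping; the conceptual work is the graph design and verifying that altering only two edges per step is genuinely compatible with it.
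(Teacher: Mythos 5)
Your plan follows the same template as the paper (split Nesterov-type quadratics on two vertex clusters, a span/propagation argument yielding $\|x_k-x_*\|^2\ge(1-4\sqrt{\mu/L})^{O(k/\chi)+2}\|x_0-x_*\|^2$, then solving for $k$ to obtain $\Omega(\chi\sqrt{L/\mu}\log(1/\eps))$ -- the last step is exactly how the paper deduces the corollary from its Theorem~\ref{thm:lower_bounds}). However, the two ingredients you defer as ``the main obstacle'' are precisely the substance of the paper's proof, and they are not minor bookkeeping. First, the graph design: the paper uses two stars joined through a single relay vertex (diameter $4$), and at every round it retires the current relay into the already-informed side while promoting a fresh, still-uninformed leaf of the other side to be the new relay (one edge deleted, one edge added). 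This ``conveyor belt'' of relays is the mechanism that prevents information from crossing between the clusters for $\Theta(n)$ consecutive rounds even though the graph is always connected with constant diameter; a generic ``bridge endpoint walking along a fixed skeleton'' does not by itself exhibit this blocking (a naive walking bridge lets information cross in $O(\mathrm{diam})$ rounds), so your combinatorial lemma (ii) is exactly the piece that needs an explicit construction, and you have not supplied one.

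Second, the spectral control, where your hints point in directions that would fail. For the constant-diameter double star the \emph{unweighted} Laplacian has $\lambda_{\min}^+=O(1/n)$ (test the vector that is $+1$ on one star, $-1$ on the other, $0$ on the relay) while $\lambda_{\max}\ge n/2$, so $\chi=\Theta(n^2)$ and the $\Theta(n)$ crossing time would only reproduce the known $\sqrt{\chi}$ rate. The paper's Lemma~\ref{chi_upper} resolves this by reweighting: giving each edge a weight equal to the number of (fixed) shortest paths between vertex pairs that traverse it yields $L(\cG_A)\succeq \frac{1}{D}\sum_{i\ne j}\ell_{ij}$, hence $\lambda_{\min}^+\ge |\cV|/D$ and $\chi\le 2D|\cV|=O(n)$ for $D=4$; note this places \emph{large} ($\Theta(n^2)$) weights on the two bridge edges, the opposite of your suggestion of ``a small weight on whatever edge keeps the graph connected'' (a small bridge weight only appears in the paper's final tweak that raises $\chi$ to the exact prescribed value for arbitrary $\chi\ge 56$, another point your plan does not address). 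Likewise, restricting to topologies with explicitly known spectra (path, star, binary tree), as you propose for establishing (i), is exactly what the weighted-Laplacian device is introduced to avoid, since none of those topologies gives $\Theta(\chi)$ crossing time under a two-edge-per-round change budget. So the proposal is a correct outline of the reduction, but the core construction carrying the novelty of the result is missing.
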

% Theorem~\ref{thm:lower_bounds} will be proved in the forthcoming sections.

\subsection{Auxiliary lemmas for weighted Laplacians}

The weighted Laplacian enables us to adjust the edge weights in such a manner that the smallest nonzero eigenvalue can be easily estimated while simultaneously maintaining control over the largest eigenvalue. We upper bound the largest eigenvalue through the following lemma.
\begin{lemma} \label{lambda_1_upper_bound}
	Let $\cG_A=(\cV, \cE, A)$ be a weighted graph. Let $d_{\max}(\cG_A)=\max_{i\in \cV}{\sum_{(i, j)\in\cE}a_{ij}}$ denote the maximum vertex degree of $\cG_A$. Then $\lambda_{\max}(L(\cG_A)) \leq 2d_{\max}(\cG_A)$.
\end{lemma}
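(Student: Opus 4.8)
The plan is to use the quadratic-form (Rayleigh quotient) characterization of the largest eigenvalue together with the well-known Laplacian identity. Write $L(\cG_A)=D-A$, where $D$ is the diagonal matrix with $D_{ii}=\sum_{(i,j)\in\cE}a_{ij}$. First I would record the standard fact that for every $x\in\R^n$,
\begin{equation*}
x^\top L(\cG_A) x=\sum_{(i,j)\in\cE}a_{ij}(x_i-x_j)^2,
\end{equation*}
which follows directly from the definition of the weighted Laplacian by collecting the contributions of each edge. Since the right-hand side is a sum of nonnegative terms, this also re-confirms positive semidefiniteness, but the point here is to get an upper bound.

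Next I would bound each edge term by $(x_i-x_j)^2\le 2(x_i^2+x_j^2)$, so that
\begin{equation*}
x^\top L(\cG_A) x\le 2\sum_{(i,j)\in\cE}a_{ij}(x_i^2+x_j^2)
=2\sum_{i\in\cV}x_i^2\Big(\sum_{j:\,(i,j)\in\cE}a_{ij}\Big)
\le 2\,d_{\max}(\cG_A)\sum_{i\in\cV}x_i^2,
\end{equation*}
where the middle equality is just a re-indexing of the double sum (each edge $(i,j)$ contributes $a_{ij}x_i^2$ to vertex $i$ and $a_{ij}x_j^2$ to vertex $j$), and the last inequality uses the definition of $d_{\max}(\cG_A)$. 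Hence $x^\top L(\cG_A)x\le 2d_{\max}(\cG_A)\|x\|^2$ for all $x$.

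Finally, since $L(\cG_A)$ is symmetric, $\lambda_{\max}(L(\cG_A))=\max_{x\ne 0} x^\top L(\cG_A)x/\|x\|^2\le 2d_{\max}(\cG_A)$, which is the claim. (An alternative one-line route is the Gershgorin circle theorem: in row $i$ the diagonal entry equals $\sum_{(i,j)\in\cE}a_{ij}\le d_{\max}$ and the off-diagonal absolute values sum to the same quantity, so every eigenvalue lies in $[0,2d_{\max}]$.) There is no real obstacle here; the only thing to be slightly careful about is the re-indexing of the edge sum over vertices and the fact that loop-less undirected graphs are assumed, so no edge is counted with a self-loop.
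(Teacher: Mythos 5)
Your argument is correct, but it takes a different route from the paper. You work with the quadratic form: using the identity $x^\top L(\cG_A)x=\sum_{(i,j)\in\cE}a_{ij}(x_i-x_j)^2$, the elementary bound $(x_i-x_j)^2\le 2(x_i^2+x_j^2)$, and a re-indexing of the edge sum to vertex degrees, you get $x^\top L(\cG_A)x\le 2d_{\max}(\cG_A)\|x\|^2$ and conclude via the Rayleigh quotient. The paper instead gives a one-step argument from the Gershgorin circle theorem (which you only mention parenthetically as an alternative): since each off-diagonal row sum of the Laplacian equals the diagonal entry $d_i$, every eigenvalue lies in $\overline{B}(d_i,d_i)$, hence in $\overline{B}(d_{\max},d_{\max})$, giving $\lambda_{\max}\le 2d_{\max}$. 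The Gershgorin route is shorter and needs no manipulation of the quadratic form; your route is more self-contained (it avoids invoking Gershgorin as a black box), makes the positive semidefiniteness and the edge-wise structure of the Laplacian explicit, and shows exactly where the factor $2$ comes from, namely from $(x_i-x_j)^2\le 2(x_i^2+x_j^2)$ with equality direction only when $x_j=-x_i$ on the heaviest-degree vertex's neighborhood. Both proofs are complete and yield the same constant.
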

\begin{proof}
	This is a classical result, let us give a proof of it. Let $d_i = \sum_{(i, k)\in\cE} a_{ik}$.

	First, we observe that $\sum_{j=0}^{n} [L(\cG_A)]_{ij} = 0$ for each $i$. This implies that the row sum of the $i$-th row coincides with the diagonal element $d_i$:
	
	$$
	\sum_{j \neq i} |[L(\cG_A)]_{ij}| = \sum_{(i, j)\in\cE} a_{ij} = d_i.
	$$
	
	Let $\ol B(a, R)$ denote a closed disc in complex plane with center at $a$ and radius $R$. By the Gershgorin circle theorem, we have
	$$
	\lambda_1 \in \bigcup_{i=1}^{n} \overline{B}(d_i, d_i)=\overline{B}(d_{max}(\cG_A), d_{max}(\cG_A)).
	$$

	As a result, $\lambda_{\max}(\cG_A) \leq 2d_{\max}(\cG_A)$.
\end{proof}

\begin{lemma}\label{chi_upper}
    For any unweighted graph $\cG=(\cV,\cE)$ there exists a weighted graph $\cG_A = (\cV, \cE, A)$ such that $\chi(L(\cG_A)) \le 2Dn$, where $n = |\cV|$ and $D$ is the diameter of $\cG$.
\end{lemma}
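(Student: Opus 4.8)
The plan is to take the edge weights to be \emph{shortest-path congestions}; this is the weighting that lets one control $\lambda_{\min}^+$ and $\lambda_{\max}$ simultaneously. Assume $\cG$ is connected (otherwise $D=\infty$ and the bound is vacuous). For every unordered pair $\{u,v\}$ of distinct vertices fix one shortest path $P_{uv}$ in $\cG$, so that $|P_{uv}|\le D$, and set for each $e\in\cE$
\[
a_e \;=\; \#\braces{\{u,v\} : e\in P_{uv}}.
\]
Since the endpoints of an edge $e=(u,v)$ are at distance one, $P_{uv}$ is the single edge $e$, hence $a_e\ge 1>0$; so $\cG_A=(\cV,\cE,A)$ is a connected positively weighted graph and, by the Laplacian-kernel fact recalled above, $L(\cG_A)$ is positive semidefinite with $\ker L(\cG_A)=\spn\{\one\}$. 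In particular $\lambda_{\min}^+(L(\cG_A))=\min\{\,x^TL(\cG_A)x/\norm{x}^2 : x\neq 0,\ \one^Tx=0\,\}$.

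\emph{Lower bound on $\lambda_{\min}^+$ (a Poincar\'e / canonical-path estimate).} For any $x$ with $\one^Tx=0$ one has the elementary identity $\sum_{\{u,v\}}(x_u-x_v)^2=n\norm{x}^2$. Telescoping $x_u-x_v$ along $P_{uv}$ and applying Cauchy--Schwarz gives $(x_u-x_v)^2\le|P_{uv}|\sum_{(a,b)\in P_{uv}}(x_a-x_b)^2\le D\sum_{(a,b)\in P_{uv}}(x_a-x_b)^2$; summing over all pairs and exchanging the order of summation turns the right-hand side into $D\sum_{e\in\cE}a_e(x_a-x_b)^2=D\cdot x^TL(\cG_A)x$. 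Hence $x^TL(\cG_A)x\ge (n/D)\norm{x}^2$ on $\one^{\perp}$, i.e. $\lambda_{\min}^+(L(\cG_A))\ge n/D$.

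\emph{Upper bound on $\lambda_{\max}$.} By Lemma~\ref{lambda_1_upper_bound} it suffices to bound $d_{\max}(\cG_A)=\max_i\sum_{(i,j)\in\cE}a_{ij}$. Rewrite this sum as $\sum_{\{u,v\}}\bigl(\text{number of edges of }P_{uv}\text{ incident to }i\bigr)$; since a shortest path is a simple path, at any fixed vertex it uses at most two of its edges, so $\sum_{(i,j)\in\cE}a_{ij}\le 2\binom{n}{2}=n(n-1)$. Therefore $\lambda_{\max}(L(\cG_A))\le 2n(n-1)$, and combining with the previous step,
\[
\chi(L(\cG_A))=\frac{\lambda_{\max}(L(\cG_A))}{\lambda_{\min}^+(L(\cG_A))}\le\frac{2n(n-1)}{n/D}=2D(n-1)\le 2Dn.
\]

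The main point — and the reason the congestion weighting is the right choice — is that the two estimates pull against one another: keeping $\lambda_{\min}^+$ away from $0$ on a graph of large diameter wants long, low-conductance links, whereas keeping $\lambda_{\max}$ small wants the total weight incident to each vertex to be small. The numbers $a_e$ reconcile this: they are exactly the multiplicities that absorb the factor $|P_{uv}|\le D$ in the Poincar\'e inequality, while after summing over the edges at a vertex they count pairs $\{u,v\}$ with multiplicity at most $2$, and there are only $\binom n2$ such pairs. The remaining items to check are routine: the identity $\sum_{\{u,v\}}(x_u-x_v)^2=n\norm{x}^2$ on $\one^{\perp}$, the simple-path incidence count, and the degenerate case $n=1$ (trivial).
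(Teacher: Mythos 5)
Your proof is correct and follows essentially the same route as the paper: the same shortest-path-congestion weights, the same telescoping/Cauchy--Schwarz estimate giving $\lambda_{\min}^+\ge n/D$ (you phrase it as a Poincar\'e inequality on $\one^\perp$, the paper phrases it as the Loewner comparison $S_{ij}\succeq\frac{1}{d}\ell_{ij}$ summed against the complete-graph Laplacian), and the same degree count combined with Lemma~\ref{lambda_1_upper_bound} for $\lambda_{\max}$. Your write-up is in fact slightly cleaner (it yields $2D(n-1)$ and makes the positivity of the weights and the kernel condition explicit), but it is not a different argument.
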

\begin{proof}

%The following reasoning is similar to the proof of the following fact: $\lambda_{n-1}(L(\cG))\le\frac{1}{nD}$, where $L(\cG)$ is the classical Laplacian, $n$ is the number of vertices in the graph $\cG$, and $D$ is its diameter.

% First, let us introduce a mini-Laplacian $\ell_{i,j}$ corresponding to edge $(i, j)\in\cE$. The mini-Laplacian $\ell_{ij}$ is an $n \times n$ matrix defined as follows:
% \begin{align}\label{mini_laplacian}
% 	[\ell_{ij}]_{kl} = \begin{cases}
% 		{~~}1, & \text{if } (k, l) = (i, i) \text{ or } (k, l) = (j, j), \\
% 		-1, &\text{if } (k, l) = (i, j) \text{ or } (k, l) = (j, i), \\
% 		{~~}0, & \text{otherwise}.
% 	\end{cases}
% \end{align}
For each edge $(i, j)\in\cE$, let $s_{ij}$ denote an arbitrary shortest path from $i$ to $j$ (in the unweighted graph $\cG$). Let us build a weighted graph $\cG_A = (\cV, \cE, A)$ in the following way. Assign each of the edges $(i, j)\in\cE$ a weight $a_{ij}$ equal to the number of shortest paths traversing edge $(i, j)$, i.e.
\begin{align*}
	a_{ij} = \left|s_{kl}:~ (i, j)\in s_{kl}\right|,
\end{align*}
and set $a_{ij} = 0$ if $(i, j)\notin\cE$. Note that the Laplacian of $\cG_A$ has the form
\begin{equation*}
	L(\cG_A) = \sum_{(i, j) \in E} a_{ij} \ell_{ij}.
\end{equation*}
For a fixed shortest path $s_{ij}$ of length $d$ in the graph, let $s_{ij} = \{i=v_0, \ldots, v_d=j\}$, and let $S_{ij}$ denote the sum of mini-Laplacians corresponding to edges in the path $s_{ij}$, i.e.,
\begin{align*}
S_{ij} = \ell_{i v_1} + \ell_{v_1 v_2} + \ldots + \ell_{v_{d-1} j}.
\end{align*}
Note that the Laplacian of $\cG_A$ can be also written as
\begin{align*}
L(\cG_A) = \sum_{(i, j)\in\cE} S_{ij}.
\end{align*}
%where $\ell_{ij}\in\R^{n\times n}$ is the mini-Laplacian corresponding to the edge $(i, j)$.
%So, the weighted Laplacian matrix $L(\cG)$ is constructed by taking the mini-Laplacians $\ell_{ij}$ for each edge $(i, j)$ in the graph $\cG$ and taking their weighed sum.

To estimate $\lambda_{n-1}(L(\cG_A))$, we use the theorem stating that if $A, B$ are symmetric matrices and $A \succeq B$, then $\lambda_i(A) \geq \lambda_i(B)$ for all $i \in {1, \dots, n}$, where $\lambda_i(A)$ and $\lambda_i(B)$ are sorted in a descending order. We will show that for any shortest path $s_{ij}$ with length $d$,

%Consider the sum $S_{ij}$ for each shortest path from $i$ to $j$.
\begin{align*}
S_{ij} = \ell_{i v_1} + \ell_{v_1 v_2} + \dots + \ell_{v_{d-1} j} \succeq \frac{1}{d} \ell_{ij}.
\end{align*}
Let $x \in \mathbb{R}^n$. Then
\begin{align*}
x^T S_{ij} x &= x^T (\ell_{i v_1} + \ell_{v_1 v_2} + \dots + \ell_{v_{d-1} j}) x
= \frac{1}{d} \left(d\sum_{l=0}^{d-1} (x_{v_l} - x_{v_{l+1}})^2\right) \\
&= \frac{1}{d} \left(\left(\sum_{l=0}^{d-1}1^2\right)\sum_{l=0}^{d-1} (x_{v_l} - x_{v_{l+1}})^2\right)
\overset{\circledOne}{\geq} \frac{1}{d} \left(\sum_{l=0}^{d-1} 1\cdot (x_{v_l} - x_{v_{l+1}})\right)^2 \\
&\ge \frac{1}{d} (x_{v_0}-x_{v_d})^2
= \frac{1}{d} x^T \ell_{ij} x,
\end{align*}
where $\circledOne$ holds by Cauchy–Schwarz inequality. Since $x^\top s_{ij} x\geq \frac{1}{d}x^\top \ell_{ij} x$ holds for any $x \in \mathbb{R}^n$, we have
\begin{align*}
S_{ij} \succeq \frac{1}{d} \ell_{ij}.
\end{align*}
Now, we sum these lower bounds for all pairs of distinct vertices $i$ and $j$ in the graph, obtaining
\begin{align*}
L(\cG_A) = \sum_{(i, j)\in\cE} S_{ij} \succeq \frac{1}{D} \sum_{(i, j)\in\cE} \ell_{ij}.
\end{align*}
Thus, our matrix $L(\cG_A)$ is not smaller than the Laplacian corresponding to the complete graph divided by $D$. As it is known, the spectrum of the complete graph consists of the number $n$ with multiplicity $n-1$ and $0$ with multiplicity $1$. Therefore,
\begin{equation}\label{chi_upper_through_d_max}
\lambda_{\min}^+(L(\cG_A)) \geq \frac{n}{D}.
\end{equation}
Next, by Lemma~\ref{lambda_1_upper_bound} we have that $\lambda_1(L(\cG_A))\leq 2d_{\max}(\cG_A)$. To estimate $d_{\max}(\cG_A)$, note that each shortest path from $i$ to $j$ passes through a fixed vertex $v$ at most once, so the adjacent edges to $v$ participate in the sum of mini-Laplacians for the path from $i$ to $j$ at most twice. There are $\frac{n(n-1)}{2}$ pairs of vertices. Therefore, 
$$
d_{\max}(\cG_A) \leq 2\frac{n(n-1)}{2} < n^2.
$$
As a result, we have
\begin{align*}
\frac{\lambda_{\max}(L(\cG_A))}{\lambda_{\min}^+(L(\cG_A))} \leq \frac{2n^2}{n/D}\leq 2nD,
\end{align*}
which completes the proof.
\end{proof}

\subsection{Counterexample graph sequence}

The structure of the further proof will then be identical to the structures in the articles \cite{scaman2017optimal, metelev2023consensus, kovalev2021lower}. Let us describe the structure of the counterexample network.

\begin{definition}
	Consider two star graphs with the number of vertices $a$ and $b$, respectively. Let us add an additional isolated vertex to these two graphs and connect it with edges to the centers of the stars. We denote the resulting graph by $T_{a, b}$.
\end{definition}

\begin{figure}[ht]
    \centering
    \includegraphics[width=0.3\textwidth]{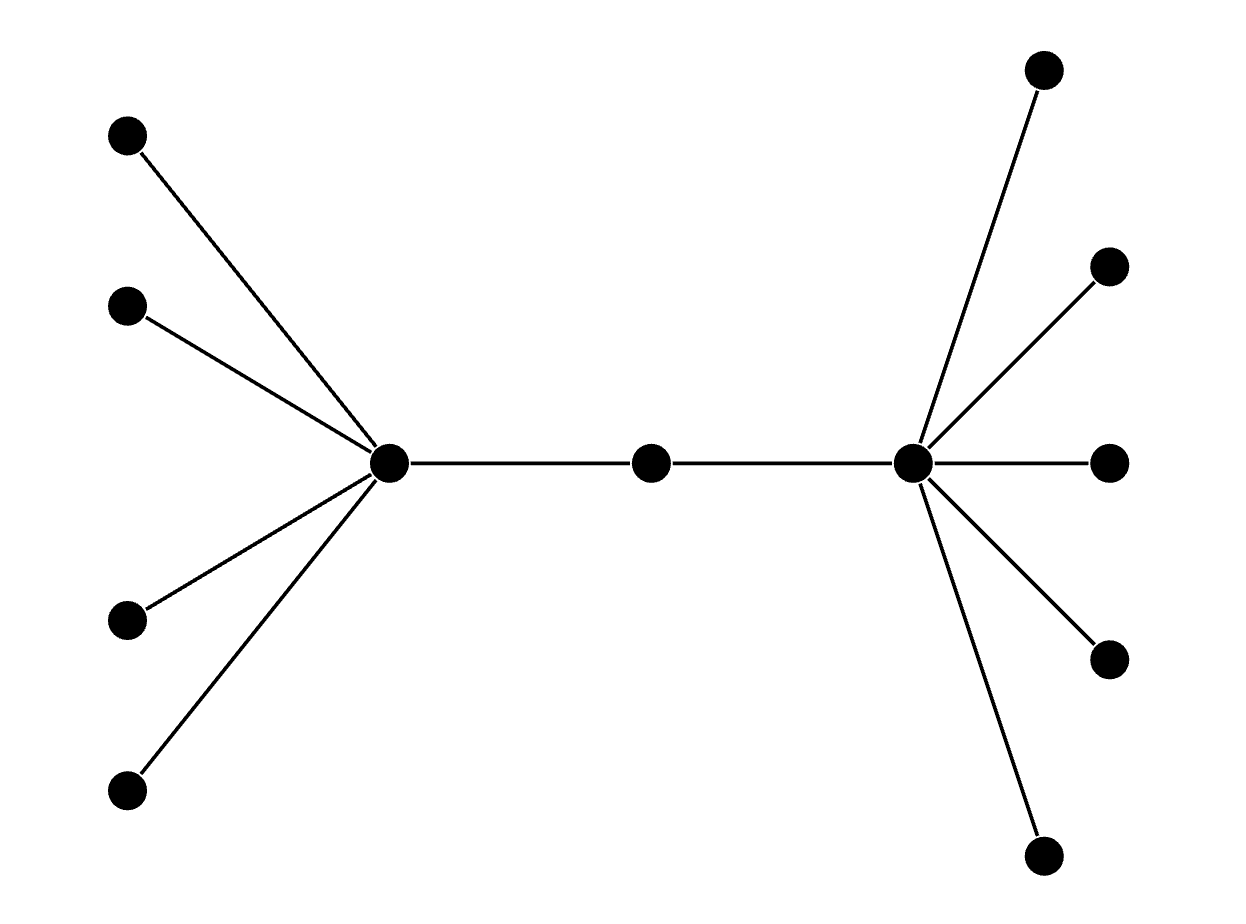}
    \caption{Graph $T_{4, 5}$.}
\end{figure}

Consider the graph $T_{n, n}$, which consists of two "glued-together stars": the left and the right. We will refer to the set of pendant vertices adjacent to the center of the left star as the left partition. The right partition is defined similarly. Let us take the left partition and select $\left[\frac{n}{2}\right]$ vertices from it, denoting them $\cV_1$. In the right partition, also select $\left[\frac{n}{2}\right]$ vertices and denote them $\cV_2$. Next, we will introduce functions on the vertices of this graph. 

\begin{figure}[ht]
    \centering
    \includegraphics[width=0.3\textwidth]{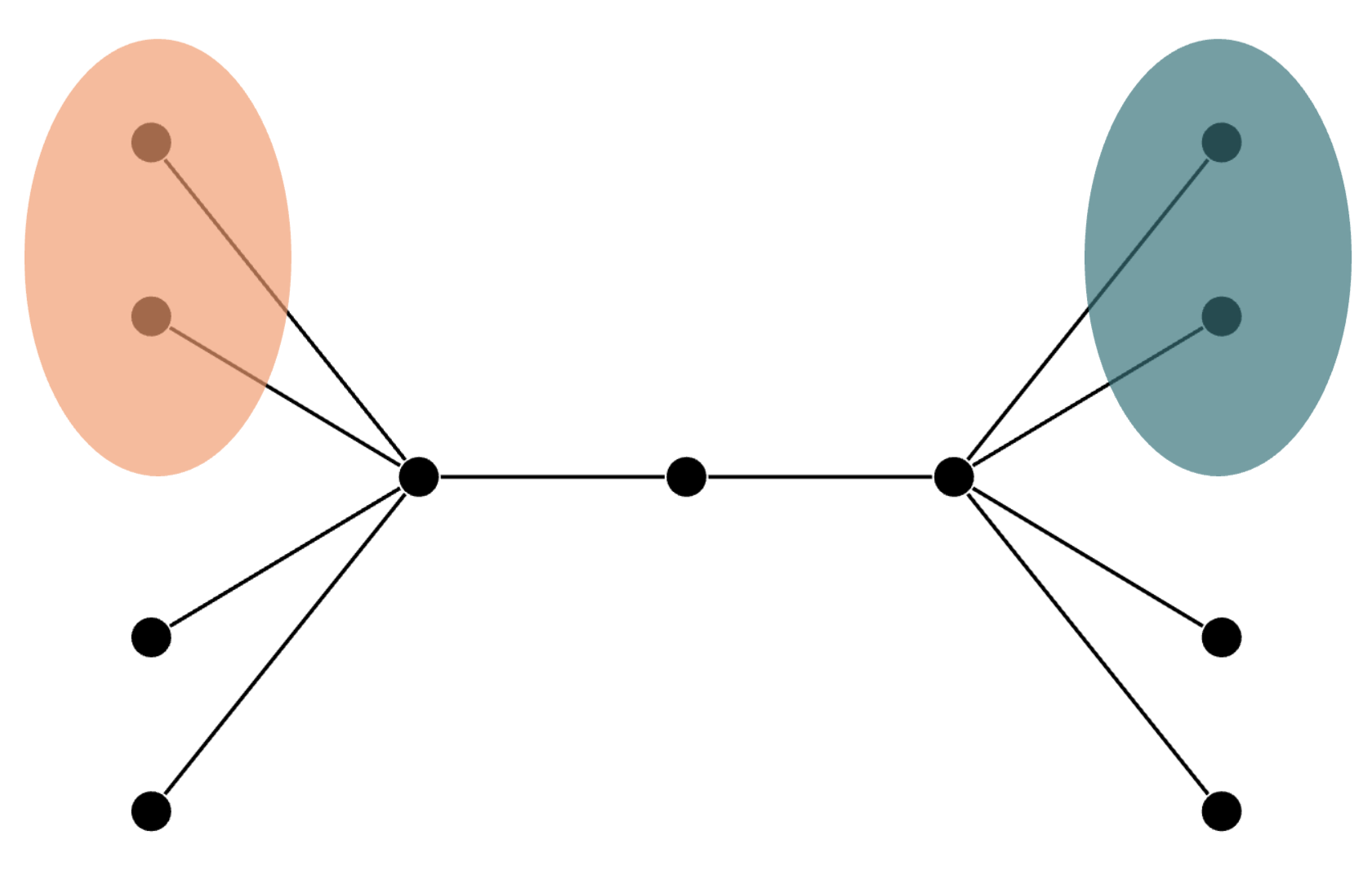}
    \caption{An example of the graph $T_{4, 4}$ with highlighted sets $\mathcal{V}_1$ and $\mathcal{V}_2$. The vertices in the red region belong to the set $\mathcal{V}_1$, while the vertices in the blue region belong to the set $\mathcal{V}_2$.}
\end{figure}

From now on, let us consider a graph $T_{n, n}$ with $n \ge 2$.

Denote the vertex functions $f_v: \ell_2\rightarrow \R$ depending on vertex type:
\begin{equation}\label{poly_func}
f_v(x)=\begin{cases}
 \frac{\mu}{2n}\norm{x}^2+\frac{L-\mu}{4|\cV_2|}\sum_{k=1}^{\infty}(x_{2k-1}-x_{2k})^2, &  v\in \cV_1,\\
 \frac{\mu}{2n}\norm{x}^2+\frac{L-\mu}{4|\cV_1|}\[(x_1-1)^2+\sum_{k=1}^{\infty}(x_{2k}-x_{2k+1})^2\], & v\in \cV_2, \\
 \frac{\mu}{2n}\norm{x}^2, & v\in \cV\setminus\cV_1\setminus\cV_2.
\end{cases}
\end{equation}

From the definition of $\cV_1$ and $\cV_2$ we can deduce that
\begin{equation}
 |\cV_1|=|\cV_2|\ge \frac{n}{4}. \label{werwe}
\end{equation}
Let us estimate the network's global characteristic number using the local one
\begin{equation*}
 \kappa_l = \frac{\frac{L-\mu}{|\cV_1|}+\frac{\mu}{n}}{\frac{\mu}{n}} = \frac{n}{|\cV_1|}(\kappa_g-1)+1 \le 4(\kappa_g-1)+1.
\end{equation*}
 Thus, we have
\begin{equation}\label{kappa_rels}
 \kappa_g \ge \frac{\kappa_l - 1}{4}+1.
\end{equation}

In the following, we will describe the structure of changes in our graph. In total, our graph sequence will be divided into two alternating phases: Phase 1 - "flow of vertices from left to right" and Phase 2 - "flow of vertices from right to left." Let us consider the very first graph in our sequence. We will take the previously described graph $T_{n, n}$ with highlighted sets $\cV_1, \cV_2$ and transfer all unmarked vertices of the right partition to the left partition. The resulting graph will have the form $T_{2n-\left[\frac{n}{2}\right], \left[\frac{n}{2}\right]}$, which will be the first element of our sequence.

Next, let us consider the first phase and define it by induction. We have defined the first graph of the first phase. Suppose that we have the $n$-th element of the first phase and let it have the form $T_{a, b}$, where $a+b=2n$, $a\ge[n/2] + 1$, $b \ge [n/2]$, and we assume that the whole set $\cV_1$ belongs to the left partition and the set $\cV_2$ to the right partition. Let us denote the left center of the star as $l$, and the right center as $r$. Let us denote the vertex connecting $l$ and $r$ by $v$, and denote by $u$ any unmarked vertex of the left partition. We then transfer the vertex $v$ to the right partition and place the vertex $u$ in the position of $v$ (that is, make it the connecting vertex). As a result, we removed the edge $(v, l)$ and added the edge $(u, r)$. As a result, we get a graph of the form $T_{a-1,b+1}$.

\begin{figure}[ht]
    \centering
    \includegraphics[width=0.6\textwidth]{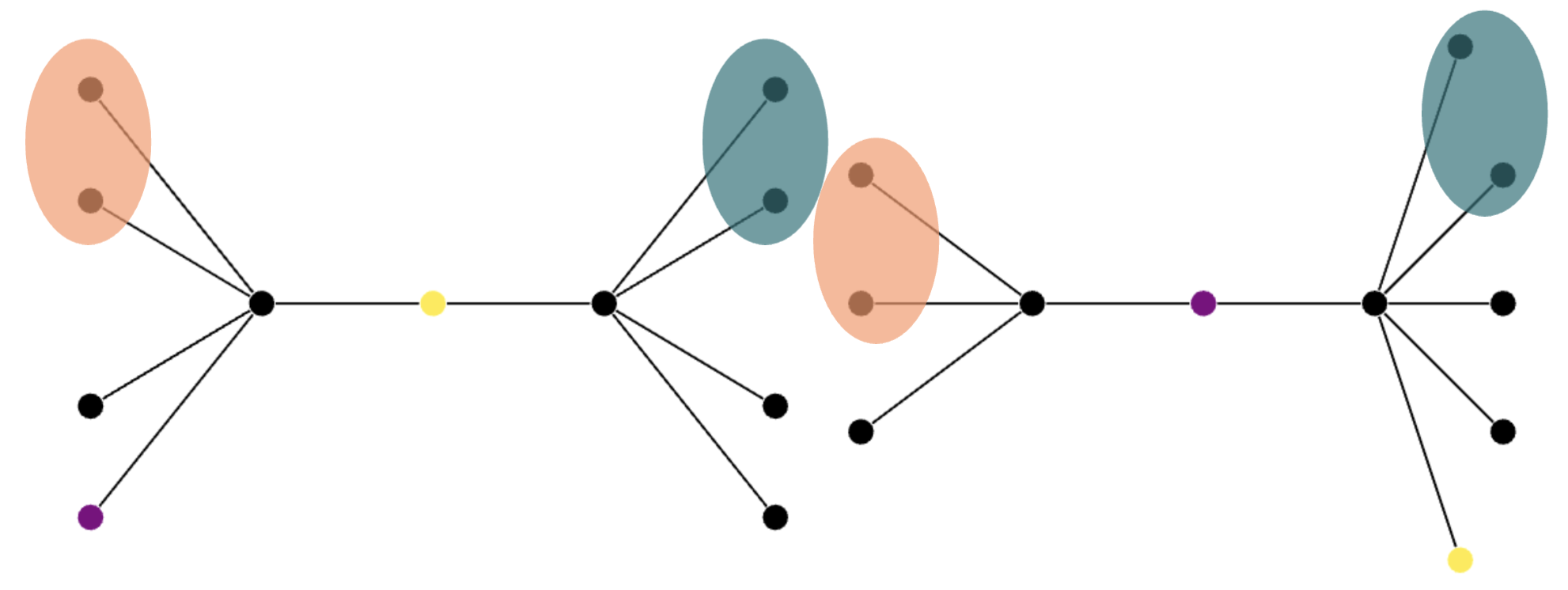}
    \caption{Here is an example of the graph changing scheme. In the left image, the 3rd iteration of Phase 1 is illustrated, and in the right image, the 4th iteration is presented. The red and blue areas respectively consist of the sets $\cV_1$ and $\cV_2$. The vertex $v$ is marked in yellow, while the vertex $u$ is indicated in purple.}
    \label{fig:changing_scheme_example}
\end{figure}

The first phase continues until the graph takes the form $T_{\left[\frac{n}{2}\right], 2n-\left[\frac{n}{2}\right]}$. At this iteration, the first phase ends and the second phase begins (i.e. they intersect at one element). The second phase is defined similarly but symmetrically, with vertices now "flowing" from the right partition to the left partition.

Now, let us consider a model where we assume that the vertices in $\cV_2$ contain some information in their local memory, and they need to transfer this information to the vertices in $\cV_1$. Vertices perform a communication iteration after each graph change and can share information with their neighbors. If we start to change the graph from the first iteration of the first phase, it is not difficult to see that a minimum of $2n - 2\left[\frac{n}{2}\right]$ communication iterations will be required for the information transfer (that is, in the graph at the number $2n - 2\left[\frac{n}{2}\right]$, the vertices $\cV_1$ will not yet possess the information). Similarly, the reasoning for transferring information from vertices $\cV_1$ to $\cV_2$ can be applied during the second phase.

Let us define $t$ as the time it takes to transfer information from one partition to the other one. We have
\begin{equation}\label{flow_lower}
 t=2n-2\left[\frac{n}{2}\right]\ge n.
\end{equation}

\subsection{Proof of Theorem~\ref{thm:lower_bounds}}

The next explanation is taken from \cite{metelev2023consensus}.

Let $x_0=0$ be the initial point for the first-order decentralized algorithm. For every $m\ge 1$, we define $l_m=\min\{k\ge 1|\exists v: \exists x\in \cH_{v}(k): x_m \neq 0\}$ as the first moment when we can get a nonzero element at the $m$-th place at any node.

Considering the functions on vertices from $\cV_1$ and $\cV_2$, we can conclude that functions on vertices from $\cV_1$ can "transfer" (by calculating the gradient) information (nonzero element) from the odd positions ($1, 3, 5,\ldots$) to the next even positions ($2, 4, 6, \ldots$ correspondingly). At the same time, functions on vertices from $\cV_2$ can transfer information from the even positions ($2, 4, 6,\ldots$) to the next odd positions ($3, 5, 7, \ldots$). Therefore, for the network to get a new nonzero element at the next position, a whole phase is required, that is, $t$ communication iterations.

To reach the $m$-th nonzero element, we need to make at least $m$ local steps and $(m - 1)t$ communication steps to transfer information from gradients between $\cV_1$ and $\cV_2$ sets. Therefore, the time $l_m$ at which $m$-th element becomes nonzero can be estimated as
\begin{equation}\label{non_zero_inf}
 l_m \ge (m-1)t+m.
\end{equation}

The solution of the global optimization problem is $x^*_k = \left(\frac{\sqrt{\kappa_g} - 1}{\sqrt{\kappa_g} +1}\right)^k$.

For any $m, k$ such that $l_m > k$
\begin{equation*}
 \norm{x_k-x_*}^2 \ge (x_*)_m^2+(x_*)_{m+1}^2+\ldots=\left(\frac{\sqrt{\kappa_g} - 1}{\sqrt{\kappa_g} +1}\right)^m\norm{x_0-x_*}^2.
\end{equation*}

Using \eqref{non_zero_inf} we can take $m=\ceil{\frac{k}{t+1}}+1$. From \eqref{kappa_rels} we conclude that $\frac{\sqrt{\kappa_g} - 1}{\sqrt{\kappa_g} +1} \ge 1 - \frac{4}{\sqrt{\kappa_l}}$

Therefore using \eqref{kappa_rels} and \eqref{flow_lower} we get
\begin{equation*}
 \norm{x_k-x_*}^2 \ge \left(1-4\sqrt{\frac{\mu}{L}}\right)^{\ceil{k/\(n+1\)}+1}\norm{x_0-x_*}^2.
\end{equation*}

For each graph in our sequence we map a weighted Laplacian from \eqref{chi_upper}, so $\chi\le 8n$.

Rearranging an expression, we get
\begin{equation}\label{arg_lower}
 \norm{x_k-x_*}^2 \ge \left(1-4\sqrt{\frac{\mu}{L}}\right)^{\frac{8k}{\chi}+2}\norm{x_0-x_*}^2.
\end{equation}

Note that our reasoning holds for any value $\chi,L,\mu$ satisfying the following conditions: $\chi = 8(2n+3), n\in\N, n\geq 2$, $L>16\mu>0$. Let us explain why such an estimation is appropriate for any $\chi\geq 56$. Let $\chi\geq 56$, take the closest to it from below $\chi_0=8(2n+3)\geq 56$. Then it is not difficult to see that 
\begin{equation}\label{chi_0_chi_relation}
    \chi_0/\chi\geq \frac{7}{9}.
\end{equation}
Let us take our sequence of graph counterexamples for values $\chi_0, L, \mu$. Take a lower bound for them
\begin{equation}\label{chi_0_bound}
    \norm{x_k-x_*}^2 \ge \left(1-4\sqrt{\frac{\mu}{L}}\right)^{\frac{8k}{\chi_0}+2}\norm{x_0-x_*}^2.
\end{equation}
Then let us "tweak" the weights of the edges so that the characteristic numbers of their weighted Laplacean numbers increase up to $\chi$. This can always be done by taking any edge and decreasing its weight to $0$, then the smallest positive eigenvalue will go to infinity. Using \eqref{chi_0_chi_relation} and \eqref{chi_0_bound} we obtain
\begin{equation*}
    \norm{x_k-x_*}^2 \ge \left(1-4\sqrt{\frac{\mu}{L}}\right)^{\frac{72k}{7\chi}+2}\norm{x_0-x_*}^2.
\end{equation*}

\section{Conclusion}
In this paper, we study the lower bounds of decentralized optimization problems in the case of a slowly changing communication network. Specifically, we consider the case of changing at most two edges per iteration. A lower bound $\chi\sqrt{L/\mu}\log(1/\epsilon)$ was obtained, which coincides with the lower bound in the class of problems with arbitrary change of edges per iteration. Thus, the question raised in \cite{metelev2023consensus} can be considered closed in the formulation under discussion. However, there are some open questions, such as whether it is possible to obtain acceleration in the case when the graph changes even more slowly (for example, when no more than $\log{n}$ or $const$ edges change per $n$ iterations), or whether it is possible to obtain acceleration on average in the case when changes of the edges are random. %This confirms the result from \cite{metelev2023consensus} about the connected skeleton, which shows that acceleration is possible on a varying network under certain conditions.

In the case of a Markovian-varying network, we also obtain a consensus result that allows for the construction of an optimization algorithm whose convergence rate is similar (up to a logarithmic factor) to that of the static case, but with the addition of a term characterized by a Markovian property. Perhaps the extra logarithm could be avoided by constructing a more sophisticated method.

Although the results of the lower bounds are quite pessimistic and argue that no acceleration can be achieved with arbitrary slow changes, some examples (including results in the Markov setting) show that under certain conditions an improvement can be made, and finding such conditions is of interest.

Our work is rooted in a theoretical and mathematical framework, therefore it does not involve the analysis or generation of any datasets.

This work was supported by a grant for research centers in the field of artificial intelligence, provided by the Analytical Center for the Government of the Russian Federation in accordance with the subsidy agreement (agreement identifier 000000D730321P5Q0002) and the agreement with the Moscow Institute of Physics and Technology dated November 1, 2021 No. 70-2021-00138.

%The research was supported by Russian Science Foundation (project No. 23-11-00229), \url{https://rscf.ru/en/project/23-11-00229/}.

\bibliography{references,consensus}
 
\end{document}